\newtheorem{lem}{Lemma}
\newtheorem{prop}{Proposition}
\theoremstyle{definition}
\newtheorem{defn}{Definition}
\newtheorem{rem}{Remark}
\begin{document}

\title[Regularising transformations for complex differential equations]{Regularising transformations for complex differential equations with movable algebraic singularities}

\author{Thomas Kecker and Galina Filipuk}

\begin{abstract}
In a 1979 paper, K. Okamoto introduced the space of initial values for the six Painlev\'e equations and their associated Hamiltonian systems, showing that these define regular initial value problems at every point of an augmented phase space, a rational surface with certain exceptional divisors removed. We show that the construction of the space of initial values remains meaningful for certain classes of second-order complex differential equations, and more generally, Hamiltonian systems, where all movable singularities of all their solutions are algebraic poles (by some authors denoted the quasi-Painlev\'e property), which is a generalisation of the Painlev\'e property. The difference here is that the initial value problems obtained in the extended phase space become regular only after an additional change of dependent and independent variables. Constructing the analogue of space of initial values for these equations in this way also serves as an algorithm to single out, from a given class of equations or system of equations, those equations which are free from movable logarithmic branch points.
\end{abstract}

\maketitle

\paragraph{Keywords}
Space of initial values, blow-up, movable algebraic singularity, complex differential equation

\section{Introduction}
Differential equations and their solutions in the complex plane have been studied extensively since the 19th century. A main motivation then was that new transcendental functions could be defined and studied as the solutions of certain differential equations. For example, Airy's equation, Bessel's equation, Weber-Hermite equation etc., all of which are important in mathematical physics, have solutions which cannot be expressed in terms of elementary functions. Rather, their solutions can be given e.g.\ in terms of power series expansions around a point, convergent in certain domains, defining analytic functions there, or by asymptotic series. Briot and Bouquet \cite{BriotBouquet} noted that cases where a differential equation can be integrated directly are extremely rare, and one should therefore study the properties of the solutions of a differential equation through the equation itself, as they have demonstrated for elliptic functions. All the equations mentioned above are linear differential equations, with non-constant coefficients. The singularities of their solution are {\it fixed}, i.e.\ they can occur only at those points where either one of the coefficients of the equation becomes singular or where the coefficient multiplying the highest derivative term vanishes. The fixed singularities essentially can be read off from the equation itself, and the nature of these singularities can be determined. The case is more involved for non-linear differential equations, for which singularities can develop somewhat spontaneously, depending on the initial data, and a priori the nature of the singularities cannot be determined by inspecting the differential equation. In particular, the positions of these singularities depend on the initial data prescribed for the equation. Roughly speaking, going from one solution of the equation to a different solution under a small variation in the initial data, the position of the singularities changes in a continuous fashion. Such singularities are thus called {\it movable}. For a detailed discussion and a more exact definition of movable singularities we refer to the article \cite{Murata1988} by Murata.

A main motivation for complex analysts studying differential equations is to find new mathematical functions with properties of interest to solve problems in physics and other areas of mathematics. 'Interesting' or 'good' mathematical functions for function theorists were considered to have no movable critical points. In other words, apart from a finite number of fixed singularities, all other (movable) singularities of any solution are poles. An equation of this kind is said to have the {\it Painlev\'e property}. For example, S. Kovalevskaya \cite{kovalevskaya} identified all the integrable cases of the equations of motion of a heavy top, by demanding that their complex solutions can be expressed by Laurent series expansions, i.e.\ solutions with singularities no worse than poles. Apart from the already known cases, namely the Lagrange and Euler top, she identified one further integrable case, given by certain ratios of the principle moments of inertia of the top, which is now known as the Kovalevskaya top.

P. Painlev\'e \cite{painleve1} and his pupil B. Gambier \cite{gambier1} took on the challenge of classifying second-order ordinary differential equations of the form
\begin{equation}
\label{2ndorderrational}
y'' = R(z,y,y'),
\end{equation}
$R$ a function rational in $y,y'$ with analytic coefficients, with the property now named after Painlev\'e. The result of this classification was a list of $50$ canonical types of equations, in the sense that any equation in the class can be obtained from an equation in the list of $50$ by applying a M\"obius type transformation 
\begin{equation*}
Y(Z) = \frac{a(z) y(z) + b(z)}{c(z)y(z) + d(z)}, \quad Z = \phi(z),
\end{equation*}
where $a,b,c,d$ and $\phi$ are analytic functions. Most of the equations in the list were found to be integrable in terms of formerly (at the time of Painlev\'e) known, classical functions, such as Airy functions, Hermite functions, Bessel functions, or other special functions (solutions of certain linear second-order differential equations with non-constant coefficients), elliptic functions, or by quadrature. Only six equations in the list turned out to produce essentially new analytic functions. These nonlinear equations are now known as the six Painlev\'e equations and their non-classical solutions are commonly called Painlev\'e transcendents. (For particular values of the parameters in the Painlev\'e equations these also have classical solutions, but not for generic parameters.) 

One way to detect equations with the Painlev\'e property, within a given class, is to first check whether they satisfy certain necessary criteria. Of such criteria, although not the one originally persued by Painlev\'e himself (who used the so-called $\alpha$-method), a very common one is to perform the {\it Painlev\'e test}, which checks whether the equation admits, at every point in the complex plane, certain formal Laurent series expansions. It is then still a much more difficult task to prove whether an equation, which passes the Painlev\'e test, actually possesses the Painlev\'e property. Proofs for the Painlev\'e property of all six Painlev\'e equations were given in \cite{Shimomura2003}, although earlier proofs exist in the literature, e.g.\ \cite{Hinkkanen1999}, \cite{Okamoto2001}, \cite{Steinmetz2000} or \cite{YanHu2003}, see also the book \cite{gromak}. There also exists a completely different approach of proving the Painlev\'e property making use of the so-called isomonodromy method \cite{Fokas}.

In \cite{filipukhalburd1,filipukhalburd2,filipukhalburd3}, Filipuk and Halburd apply a similar test to certain classes of second-order differential equations, but with algebraic series expansions in a fractional power of $z-z_0$,
\begin{equation}
\label{algebraicexpansion}
y(z) = \sum_{j=0}^\infty c_j (z-z_0)^{(j-j_0)/n}, \quad j_0, n \in \mathbb{N},
\end{equation}
instead of Laurent series. The test, which again relies on recursively computing the coefficients of the series expansion, gives rise to certain resonance conditions, which need to be satisfied in order for there to be no obstruction in the recurrence. Furthermore, in the papers cited above, Filipuk and Halburd prove that the conditions, within the given classes of equations, are sufficient for all movable singularities to be algebraic poles of the form (\ref{algebraicexpansion}), with the proviso that these are reachable by analytic continuation along a path of {\it finite length}. The study of this property was continued by one of the authors for other classes of second-order equations \cite{Kecker2012} and certain Hamiltonian systems \cite{Kecker2016}.

Thus, just by inspecting a non-linear differential equation, it is far from obvious to see whether it has the Painlev\'e property, or, more generally, what types of movable singularities its solutions can develop. In this article we are concerned with a method of determining, from a given equation or system of equations, what types of singularities the equation can develop. Although our point of departure are the Painlev\'e equations, we are studying classes of differential equations and Hamiltonian systems which admit different types of movable singularities other than poles, such as algebraic poles and logarithmic singularities. Employing a method originating in algebraic geometry, called a {\it blow-up}, we will resolve certain points of indeterminacy, or {\it base points}, which an equivalent system of equations acquires in an augmented (compact) phase space which includes the points at infinity in the space of dependent variables. We will see that this method essentially gives an algorithmic procedure of determining the possible types of singularities an equation can develop and to give conditions for which certain types of singularities, in particular logarithmic singularities, cannot occur. This can therefore be seen as an alternative to the Painlev\'e test and its generalisation to algebraic series expansions.

In a 1979 paper \cite{Okamoto1979}, K. Okamoto introduced the {\it space of initial values} for each of the Painlev\'e equations. These are extended phase spaces, every point of which defines a regular intial value problem in some coordinate chart of the space for one of the Painlev\'e equations. The space of initial values is obtained by first compactifying the phase space $\mathbb{C}^2$ of $(y,y')$ to some rational surface, such as e.g. $\mathbb{P}^2$ or $\mathbb{P}^1 \times \mathbb{P}^1$ (Okamoto himself started from so-called Hirzebruch surfaces), and then applying a number of blow-ups to resolve certain points of indeterminacy the equivalent system acquires in this augmented space. A blow-up is one of the most fundamental type of bi-rational transformation. In this way, we obtain bi-rational coordinate transformations between the original dependent variable $y$ and its derivative and coordinates covering the points at infinity in which the equation is regular. The space of initial values for a Painlev\'e equation is uniformly foliated by its solutions.

Through the space of initial values, every Painlev\'e equation is thus assigned a geometric meaning. Sakai \cite{Sakai2001} classified rational elliptic surfaces by $9$-point configurations in $\mathbb{P}^2$, which correspond to the geometries of the spaces of initial values of all known discrete and differential Painlev\'e equations. In this picture, it is more appropriate to divide the Painlev\'e differential equations into $8$ different types, as the geometry of some of the Painlev\'e equations is different for certain choices of parameters. This work has been elaborated on also in the extensive article \cite{Kajiwara2017}.

In the present article, we are mainly concerned with equations and systems of equations that are not of Painlev\'e type, but for which it can be shown that all movable singularities of their solutions are algebraic poles, such as studied by Shimomura \cite{Shimomura2007, Shimomura2008}, Filipuk and Halburd \cite{filipukhalburd1,filipukhalburd2, filipukhalburd3}, and one of the authors \cite{Kecker2012,Kecker2016}. Although such equations are in general not integrable, the condition on the singularities to be algebraic, rather than containing e.g.\ logarithmic branch points, guarantees some degree of regularity. After reviewing the construction of the space of initial values for the second Painlev\'e equation in the next section, we will mainly be concerned with equations of the form
\begin{equation*}
y'' = P(z,y),
\end{equation*}
$P$ a polynomial in $y$ with analytic coefficients and, more generally, Hamiltonian systems,
\begin{equation*}
H = H(z,x(z),y(z)), \quad x'(z) = \frac{\partial H}{\partial y}, \quad y'(z) = -\frac{\partial H}{\partial x},
\end{equation*}
where $H(z,x,y)$ is polynomial in the last two arguments, with analytic coefficients in $z$. Extending the phase space to complex projective space $\mathbb{P}^2$, certain points at infinity, where the flow of the Hamiltonian vector field becomes indeterminate, are resolved using the method of blowing up these so-called {\it base points}. This process has to be repeated a number of times, until the indeterminacy disappears, leading to an analogue of the space of initial values in which each point defines a regular initial value problem, but possibly only after a change in the dependent and independent variables. For the equations considered in this article, the described method is a finite procedure resulting in differential systems which allows us to determine directly the local singularity structure of an equation, i.e.\ what types of movable singularities its solutions can exhibit,  without having to explicitly construct the solutions. In particular, it is possible to determine when an equation has logarithmic branch points and to give conditions under which these logarithmic singularities are absent. These are the same conditions as the resonances found by applying a Painlev\'e test, testing the system for the existence of formal Laurent series solutions in $z-z_0$, or its generalisation to multi-valued singularities, testing for formal series solutions in fractional powers of $z-z_0$. Moreover, in the case where logarithmic singularities are absent, the procedure allows us to conclude that the algebraic series obtained are the only possible type movable singularities. Namely, employing certain approximate first integrals, we can show that the exceptional lines arising from all except the last blow-up are {\it inaccessible} for the flow of the vector field and the solution, at a singularity, must pass through the exceptional curve from the last blow-up where its behaviour is completely determined.

\section{Okamoto's space of initial values for the Painlev\'e equations}
\label{sec:InitialValueSpace}
The space of initial values was originally constructed by Okamoto \cite{Okamoto1979} for each of the six Painlev\'e equations. The idea in that paper is to consider the respective Hamiltonian systems in an extended phase space that includes all points at infinity, in order to study the behaviour at their singularities. In the case of the Painlev\'e equations, the extended phase space (with a certain exceptional divisor removed) covers all possible points, including points at infinity, at which the system defines a regular initial value problem. One of the main aims of this paper is to show that this construction is also meaningful for a wider class of ordinary differential equations with singularities other than poles, in particular for equations with algebraic poles. The other main point we wish to make is that the process of constructing the space of initial values also serves as an algorithm to single out, from a given class of equations, those equations for which the solutions are free from logarithmic singularities. We will first review the process of constructing the space of initial values here for the case of the second Painlev\'e equation,
\begin{equation}
\label{P2}
P_{\!I\!I}: \quad y''(z) = 2y^3 + z y + \alpha, \quad \alpha \in \mathbb{C}.
\end{equation}
Note that this is a non-autonomous ($z$-dependent) Hamiltonian system, letting $x=y'$ and defining the Hamiltonian,
\begin{equation}
	\label{P2_ham}
H = x^2 - y^4 + 2zy + 2\alpha y.
\end{equation} 
Okamoto \cite{Okamoto1979} also considered a different Hamiltonian, $H_{\text{Ok}} = \frac{1}{2} x^2 - \left( y^2 + \frac{z}{2} \right) x - \left( \alpha + \frac{1}{2} \right) y$, which, by eliminating $x$, leads to the same equation. 

Here, instead of equation (\ref{P2}), we start in fact from the more general class of equations
\begin{equation}
\label{P2general}
y''(z) = 2y^3 + \beta(z) y + \alpha(z),
\end{equation}
where $\alpha$ and $\beta$ are analytic functions. One can easily find necessary conditions for equation (\ref{P2general}) to have the Painlev\'e property. This is the so-called {\it Painlev\'e test}, which is performed by inserting formal Laurent series solution of the form
\begin{equation*}
y(z) = \frac{c_{-1}}{z-z_0} + c_0 + c_1(z-z_0) + c_2(z-z_0)^2 + \cdots
\end{equation*}
into the equation, with $c_{-1} = \pm 1$ being the two possible types of leading-order behaviour in this case. Computing the coefficients $c_0,c_1,c_2,\dots$ recursively leads to certain obstructions for the formal Laurent series to exist. The case when these obstructions are absent is equivalent to the {\it resonance conditions} $\beta''(z) \equiv \alpha'(z) \equiv 0$. Thus, $\beta$ is at most a linear function in $z$ whereas $\alpha$ is a constant. The case when $\beta'(z) \neq 0$ essentially reduces equation (\ref{P2general}) to equation (\ref{P2}), up to a rescaling of $z$. When $\beta'(z) \equiv 0$, this is an equation with constant coefficients which can be integrated directly in terms of elliptic functions. We will see below how we can re-discover the resonance conditions using the method of {\it blowing up the base points}.

At a singularity $z_\ast$ of a solution of equation (\ref{P2general}), where $\alpha(z)$ and $\beta(z)$ are analytic, we have
\begin{equation*}
\lim_{z \to z_\ast} \max\{|y(z)|,|y'(z)|\} = \infty.
\end{equation*}
This is a consequence of the following lemma by Painlev\'e, which in turn follows from Cauchy's local existence and uniqueness theorem for analytic solutions of differential equation, see e.g. \cite{hille}.
\begin{lem}
\label{Painlemma}
Given a system of differential equations,
\begin{equation*}
\mathbf{y}' = \mathbf{F}(z,\mathbf{y}), \quad \mathbf{y} = (y_1,\dots,y_n),
\end{equation*}
suppose that $F$ is analytic in a neighbourhood of a point $(z_\ast,\boldsymbol{\eta})$, $\boldsymbol{\eta} = (\eta_1,\dots,\eta_n) \in \mathbb{C}^n$. If there exists a sequence $(z_i)_{i \in \mathbb{N}}$, $z_i \to z_\ast$ as $i \to \infty$ so that $y_j(z_i) \to \eta_j$ for all $j=1,\dots,n$, then $\mathbf{y}$ is analytic at $z_\ast$.
\end{lem}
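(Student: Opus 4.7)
The plan is to invoke the quantitative form of Cauchy's holomorphic existence and uniqueness theorem in the variant that yields a \emph{uniform} radius of analytic existence for initial conditions lying in a compact subset of the domain of analyticity of $\mathbf{F}$, and then to combine this with the identity theorem.

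First, fix constants $r, \rho > 0$ such that $\mathbf{F}$ is analytic and bounded on the closed polydisc
\begin{equation*}
\Delta = \{(z,\mathbf{y}) : |z - z_\ast| \le r, \ |y_j - \eta_j| \le \rho \text{ for } j = 1,\dots,n\},
\end{equation*}
say $|F_j| \le M$ on $\Delta$. By Picard iteration in the analytic category (equivalently, the method of majorants), there exists $\delta = \delta(r,\rho,M) > 0$, independent of the particular base point, such that for every initial datum $(z_0,\mathbf{y}_0)$ with $|z_0 - z_\ast| \le r/2$ and $|y_{0,j} - \eta_j| \le \rho/2$, the initial value problem $\mathbf{y}' = \mathbf{F}(z,\mathbf{y})$, $\mathbf{y}(z_0) = \mathbf{y}_0$, admits a unique holomorphic solution on the open disc $\{z : |z - z_0| < \delta\}$ taking values in $\Delta$.

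Second, using the hypothesis that $z_i \to z_\ast$ and $\mathbf{y}(z_i) \to \boldsymbol{\eta}$, I choose $N$ so large that $|z_N - z_\ast| < \min(r/2, \delta/2)$ and $|y_j(z_N) - \eta_j| < \rho/2$ for all $j$. Apply the uniform existence statement with initial datum $(z_N, \mathbf{y}(z_N))$ to obtain a holomorphic solution $\tilde{\mathbf{y}}$ on the disc $\{|z - z_N| < \delta\}$. By the triangle inequality this disc is an open neighbourhood of $z_\ast$. On a small punctured neighbourhood of $z_N$ on which $\mathbf{y}$ is \emph{a priori} analytic (i.e.\ along its pre-existing domain of definition) both $\mathbf{y}$ and $\tilde{\mathbf{y}}$ are holomorphic solutions to the same initial value problem, hence coincide there by standard uniqueness, and then by the identity theorem they agree on the overlap of their domains. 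Therefore $\tilde{\mathbf{y}}$ furnishes an analytic extension of $\mathbf{y}$ through $z_\ast$.

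The only real substance in the proof is the uniform existence time $\delta$: once this has been extracted from the Picard/majorant argument, the rest is purely formal --- triangle inequality to ensure $z_\ast$ lies in the disc of existence around $z_N$, and the identity theorem to identify $\tilde{\mathbf{y}}$ with $\mathbf{y}$. A minor technical point one should be explicit about is that the $z_i$ may approach $z_\ast$ along an arbitrary complex path; this is handled automatically, because once $\tilde{\mathbf{y}}$ is defined on an honest open disc containing both $z_N$ and $z_\ast$, there is no further path-dependence to worry about.
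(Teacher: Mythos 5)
Your proof is correct and is precisely the classical argument the paper itself invokes without writing out: the lemma is attributed to Painlev\'e and stated to follow from Cauchy's local existence theorem (with the key point being the \emph{uniform} radius of existence for initial data ranging over a compact subset of the domain of analyticity of $\mathbf{F}$), which is exactly the content of your first two paragraphs. The only (harmless) imprecision is the appeal to ``the identity theorem on the overlap of the domains'': if the domain of the original solution is not simply described, one should restrict to the connected component of the overlap containing $z_N$, which suffices to conclude that $\tilde{\mathbf{y}}$ continues $\mathbf{y}$ analytically through $z_\ast$.
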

Therefore, to analyse the behaviour of the solution at a singularity, it suggests itself to include the points at infinity of the phase space, i.e.\ the line at infinity in our case, as we will start constructing the space of initial values for equation (\ref{P2general}) by extending the phase space of the differential equation to the compact surface $\mathbb{P}^2$. We introduce coordinates on the three standard charts of $\mathbb{P}^2$,
\begin{equation}
\label{homcoords}
[1:y:x] = [u:v:1] = [V:1:U],
\end{equation}
where $y$ and $x=y'$ denote the original phase space variables, and the other two coordinate charts covering $\mathbb{P}^2$ are given by $u=\frac{1}{x}, v= \frac{y}{x}$ and $U=\frac{x}{y}, V=\frac{1}{y}$, respectively. In these coordinates, equation (\ref{P2general}) is expressed as follows:

\begin{equation}
\label{extendedP2}
\begin{aligned}
u'(z) & =-\frac{u^2 v \beta (z)+u^3 \alpha (z)+2 v^3}{u}, & \quad v'(z) & =-\frac{u^2 v^2 \beta (z)+u^3 v \alpha (z)-u^2+2 v^4}{u^2}, \\
U'(z) & =\frac{-U^2 V^2+V^3 \alpha (z)+V^2 \beta (z)+2}{V^2}, & \quad V'(z) & =-U V.
\end{aligned}
\end{equation}
The line at infinity of $\mathbb{P}^2$ is given by the set $I=\{u=0\} \cup \{V=0\}$ in these coordinates. On this line, the vector field defined by (\ref{extendedP2}) is infinite, apart from the point $\mathcal{P}_1: (u,v)=(0,0)$, where it is of the indeterminate form $\frac{0}{0}$. In the vicinity of any point of $I \setminus \{\mathcal{P}_1\}$, the vector field is also 'tangential' (having zero vertical component) to the line $I$, and intuitively can never reach $I \setminus \{\mathcal{P}_1\}$. Below, we will give a formal argument to show that the line at infinity, and subsequently the exceptional lines introduced by various blow-ups, are {\it inaccessible} for the flow of the vector field away from the base points. Therefore, approaching a singularity $z_\ast$ along a curve $\gamma$, there exists at least a sequence $(z_n)_{n \in \mathbb{N}} \subset \gamma$, $z_n \to z_\ast$, such that the corresponding sequence of points in $\mathbb{P}^2$, with coordinates $(u(z_n),v(z_n))$, $(U(z_n),V(z_n))$ in the respective charts, tends to the point $\mathcal{P}_1$. A point $P \in \mathbb{P}^2 \setminus I = \mathbb{C}^2$ cannot be a limit point of the sequence since by Lemma \ref{Painlemma} the solution would be analytic at $z_\ast$ after all. 

\subsection{Resolution of base points}
A dynamical systems can be interpreted as the flow of a vector field, an arrow at each point of possible initial values for the system. A solution of the system is visualised by drawing a curve which follows the direction of the arrows in a smooth way. However, there may exist points in the phase space from which vectors emerge or sink into from all possible directions, such as the point $\mathcal{P}_1$ in the preceding paragraph, at which the vector field is a priori ill-defined. In general, we start from a rational system of equations, defined in some coordinates $(u_i,v_i)$, 
\begin{equation*}
u_i'(z) = \frac{p_{i,1}(z,u_i,v_i)}{q_{i,1}(z,u_i,v_i)}, \quad v_i'(z) = \frac{p_{i,2}(z,u_i,v_i)}{q_{i,2}(z,u_i,v_i)},
\end{equation*}
where we assume that the polynomials $p_{i,1}$, $q_{i,1}$ and $p_{i,2},q_{i,2}$ are in reduced terms, respectively. (We will let the index $i$ start counting from $0,1,2,\dots$ in the following.) The points of indeterminacy of the vector field are the common zeros $(s,t)$ of either pair of polynomials, $p_{i,1}(z,s,t) = 0 = q_{i,1}(z,s,t)$, or $p_{i,2}(z,s,t) = 0 = q_{i,2}(z,s,t)$. These {\it base points} (which may also depend on $z$), at which the behaviour of the system is a priori unknown, can be resolved using the method of {\it blowing up}, a process familiar from algebraic geometry to resolve singularities of algebraic varieties, see e.g. \cite{Hartshorne} and the work by Hironaka \cite{Hironaka1964}. By a blow-up of a point $\mathcal{P}_{i+1}: (u_i,v_i)=(s,t) \in \mathbb{C}^2$, the phase space is extended by introducing a new projective line $\mathcal{L}_{i+1}$, the points of which are in one-to-one correspondence to the various directions emanating from the base point. The extended space after blowing up $\mathcal{P}_{i+1}$, the {\it centre of the blow-up}, is given by
\begin{equation}
\label{blowupspace}
\text{Bl}_{\mathcal{P}_{i+1}}(\mathbb{C}^2) = \left\{ ((u_i,v_i),[w_0:w_1]) \in \mathbb{C}^2 \times \mathbb{P}^1 : (u_i - s)\cdot w_1 = (v_i - t) \cdot w_0 \right\}.
\end{equation}

To express the differential system in the space obtained after the blow-up, two new coordinate charts are introduced, covering the portions of the space (\ref{blowupspace}) where $w_0=0$ and $w_1=0$, respectively. We denote these coordinates by
\begin{equation*}
\begin{aligned}
u_{i+1} &= u_i - s, \quad v_{i+1} &= \frac{v_i - t}{u_i - s}, \\
U_{i+1} &= \frac{u_i - s}{v_i - t}, \quad V_{i+1} &= v_i - t.
\end{aligned}
\end{equation*}
After each blow-up, we therefore obtain two new rational systems
\begin{equation}
\label{ithbl}
\begin{aligned}
u_{i+1}' &= \frac{p_{i+1,1}(z,u_{i+1},v_{i+1})}{q_{i+1,1}(z,u_{i+1},v_{i+1})} & \quad v_{i+1}' &= \frac{p_{i+1,2}(z,u_{i+1},v_{i+1})}{q_{i+1,2}(z,u_{i+1},v_{i+1})} \\
U_{i+1}' &= \frac{P_{i+1,1}(z,U_{i+1},V_{i+1})}{Q_{i+1,1}(z,U_{i+1},V_{i+1})} & \quad V_{i+1}' &= \frac{P_{i+1,2}(z,U_{i+1},V_{i+1})}{Q_{i+1,2}(z,U_{i+1},V_{i+1})}
\end{aligned}
\end{equation}
where we assume again that the polynomials $p_{i+1},q_{i+1}$ and $P_{i+1},Q_{i+1}$ are already in reduced terms. Here, the relation $U_{i+1} = v_{i+1}^{-1}$ holds where either coordinate is non-zero, since $[v_{i+1} : 1] = [1: U_{i+1}] = [w_0:w_1]$ are homogeneous coordinates on the complex projective line, equivalent to $\mathbb{P}^1$, introduced by the blow-up. This line, $\mathcal{L}_{i+1}$, is also called the exceptional line of the blown-up space $\text{Bl}_{\mathcal{P}_{i+1}}$. The points on $\mathcal{L}_{i+1}$ are said to be {\it infinitely near} to the point $\mathcal{P}_{i+1}$. The canonical projection to the first component
\begin{equation*}
\pi_{i+1} : \text{Bl}_{\mathcal{P}_{i+1}}(\mathbb{C}^2) \to \mathbb{C}^2, \quad ((u_i,v_i),[w_0:w_1]) \mapsto (u_i,v_i),
\end{equation*}
defines a homeomorphism 
\begin{equation*}
\pi_{i+1}: \text{Bl}_{\mathcal{P}_{i+1}}(\mathbb{C}^2) \setminus \mathcal{L}_{i+1} \to \mathbb{C}^2 \setminus \{\mathcal{P}_{i+1}\}, 
\end{equation*}
that is, away from the centre $\mathcal{P}_{i+1}$ and its pre-image $\mathcal{L}_{i+1} = \pi_{i+1}^{-1} (\mathcal{P}_{i+1})$, points in $\mathbb{C}^2$ are in one-to-one correspondence with points in $\text{Bl}_{\mathcal{P}_{i+1}}$. 

In the coordinates $(u_{i+1},v_{i+1})$, resp. $(U_{i+1},V_{i+1})$, the exceptional line $\mathcal{L}_{i+1}$ is parametrised by
\begin{equation*}
(u_{i+1},v_{i+1}) = (0,c), \quad c \in \mathbb{C} \quad \text{or} \quad (U_{i+1},V_{i+1}) = (C,0), \quad C \in \mathbb{C},
\end{equation*}
with $C = c^{-1}$ for $c \neq 0$.
After each blow-up, we denote the space $\mathcal{S}_{i+1} = \text{Bl}_{\mathcal{P}_{i+1}}(\mathcal{S}_i)$, obtained by blowing up $\mathcal{S}_i$ at $\mathcal{P}_{i+1}$, where $\mathcal{S}_0 = \mathbb{P}^2$. Later, the location of the points $\mathcal{P}_i$ becomes $z$-dependent, and we therefore denote the blown up spaces by $\mathcal{S}_i(z)$. Furthermore, we define the {\it infinity set} $\mathcal{I}_i(z) \subset \mathcal{S}_i(z)$ as the union of the set $\mathcal{I}_{i-1}(z)$ under the blow-up with $\mathcal{L}_i$, that is $\mathcal{I}_i(z) = \mathcal{I}_{i-1}'(z) \cup \mathcal{L}_i$, where $\mathcal{I}'$ denotes the {\it proper transform} of the set $\mathcal{I}$ under the blow-up. We define $\mathcal{L}_0 = I \setminus \{\mathcal{P}_1 \} \subset \mathbb{P}^2$ as the line at infinity with the initial base point removed. 

\subsection{Sequence of blow-ups}
For system (\ref{extendedP2}) we found the initial base point $\mathcal{P}_1: (u_0,v_0) := (u,v) = (0,0)$. This base point can be resolved by a sequence of blow-ups as described in the following. After each blow-up, we have to examine the two resulting systems of equations (\ref{ithbl}) for new base points arising on the exceptional line. The indeterminacies of the system after the blow-up of $\mathcal{P}_{i+1}$ arise as common zeros of either pair of equations,
\begin{equation*}
\begin{aligned}
p_{i+1,1}(z,0,v_{i+1}) &= 0 = q_{i+1,1}(z,0,v_{i+1}), \\
p_{i+1,2}(z,0,v_{i+1}) &= 0 = q_{i+1,2}(z,0,v_{i+1}),
\end{aligned}
\end{equation*}
for the first system, and
\begin{equation*}
\begin{aligned}
P_{i+1,1}(z,U_{i+1},0) &= 0 = Q_{i+1,1}(z,U_{i+1},0), \\
P_{i+1,2}(z,U_{i+1},0) &= 0 = Q_{i+1,2}(z,U_{i+1},0),
\end{aligned}
\end{equation*}
for the second system. However, any indeterminacy at $(u_{i+1},v_{i+1}) = (0,c)$, $c \neq 0$, of the first system is a base point if and only if this indeterminacy also presents itself at $(U_{i+1},V_{i+1}) = (c^{-1},0)$ in the other system, and vice versa, as otherwise the behaviour of the solution is determined. In addition, we can have base points at $(u_{i+1},v_{i+1})=(0,0)$ or $(U_{i+1},V_{i+1})=(0,0)$, which are only visible in one of the charts. 

We now give the sequence of blow-ups for equation (\ref{P2general}) which resolves the base point, thus leading to the space of initial values. We do not write out the system of equations after each blow-up, as these expressions soon become very lengthy, and one is advised to use an appropriate computer algebra system to identify the base points in these systems and perform the blow-ups. Here, after the second blow-up, two new base points arise, thus the sequence branches into two cascades, after which we denote the subsequent coordinates with superscripts $\pm$.

\begin{equation*}
\begin{aligned}
{} & \mathcal{P}_1: (u_0,v_0) = \left(\frac{1}{x},\frac{y}{x}\right) = (0,0) \quad \leftarrow \quad \mathcal{P}_2: (U_1,V_1) = \left( \frac{1}{y}, \frac{y}{x} \right) = (0,0) \\
\leftarrow \quad & \mathcal{P}^{\pm}_3: (u_2,v_2) = \left( \frac{1}{y}, \frac{y^2}{x} \right) = (0,\pm 1) \quad \leftarrow \quad \mathcal{P}^{\pm}_4: (u_3^\pm,v_3^\pm) = \left( \frac{1}{y}, \frac{y \left(y^2 \mp x \right)}{x} \right) = (0,0) \\
\leftarrow \quad & \mathcal{P}^{\pm}_5: (u_4^\pm,v_4^\pm) = \left( \frac{1}{y}, \frac{y^2 \left(y^2 \mp x\right)}{x} \right) = \left( 0, \mp \frac{1}{2} \beta(z) \right) \\
\leftarrow \quad & \mathcal{P}^{\pm}_6: (u_5^\pm,v_5^\pm) = \left( \frac{1}{y}, \frac{y \left(2 y^4 \pm \left( x \beta (z) -2 x y^2 \right) \right)}{2x} \right) = \left( 0, \frac{1}{2} \beta'(z) \mp \alpha(z) \right) \\
\leftarrow \quad & \mathcal{P}^{\pm}_7: (U_6^\pm,V_6^\pm) = \left( \frac{2 x/y}{2 y^5 -x \beta' \pm (2 x \alpha + x y \beta -2 x y^3)}, \frac{2 y^5 -x \beta' \pm (2 x \alpha + x y \beta -2 x y^3)}{2 x} \right) = (0,0).
\end{aligned}
\end{equation*}

After the blow-up of $\mathcal{P}_6^\pm$, the differential system is of the form
\begin{equation}
\label{system6}
\begin{aligned}
u_6^{\pm}{}' &= -\frac{2}{d^\pm(z,u^\pm_6,v^\pm_6)} \\
v_6^{\pm}{}' &= \frac{2\alpha'(z) \mp \beta''(z) + p_{6,2}(z,u_6^\pm,v_6^\pm)}{u_6^\pm \cdot d^\pm(z,u^\pm_6,v^\pm_6)} \\
U_6^{\pm}{}' &= \frac{U_6^\pm (\pm 2\alpha'(z) - \beta''(z)) + P_{6,1}(z,U_6^\pm,V_6^\pm)}{V_6^\pm \cdot D^\pm(z,U^\pm_6,V^\pm_6)} \\
V_6^{\pm}{}' &= \frac{-2 + U_6^\pm (\pm 2\alpha'(z) - \beta''(z)) + P_{6,2}(z,U_6^\pm,V_6^\pm)}{V_6^\pm \cdot D^\pm(z,U^\pm_6,V^\pm_6)}
\end{aligned}
\end{equation}
where $p_{6,2}$ and $P_{6,i}$, $i=1,2$, are polynomials in their second and third arguments. Incidentally, the zero set $d^\pm(z,u_6^\pm,v_6^\pm) = 0 = D^\pm(z,U_6^\pm,V_6^\pm)$ is the set $\mathcal{I}^\pm_5{}'(z)$, the proper transform of the exceptional curves arising from the cascades of blow-ups $\mathcal{P}_1 \leftarrow \mathcal{P}_2 \leftarrow \cdots \leftarrow \mathcal{P}_5^+$ resp. $\mathcal{P}_1 \leftarrow \mathcal{P}_2 \leftarrow \cdots \leftarrow \mathcal{P}_5^-$, as well as the line $\mathcal{L}_0$,
\begin{equation*}
\begin{aligned}
d^\pm(z,u^\pm_6,v^\pm_6) & = \pm (2-2 (u^\pm_6)^3 \alpha (z)- (u^\pm_6)^2 \beta (z)) + (u^\pm_6)^3 \beta '(z) +2 (u^\pm_6)^4 v^\pm_6, \\
D^\pm(z,U^\pm_6,V^\pm_6) &= \pm (2-2 (U^\pm_6 V^\pm_6)^3 \alpha (z)- (U^\pm_6 V^\pm_6)^2 \beta (z)) + (U^\pm_6 V^\pm_6)^3 \beta '(z) +2 (U^\pm_6)^3 (V^\pm_6)^4.
\end{aligned}
\end{equation*}

\begin{rem}
\label{sequence_remark}
After each blow-up we have performed one can check that, for the resulting vector field $(u_i',v_i')$ on the exceptional curve, the $u_i'$-component is zero, whereas the $v_i'$ component becomes infinite in each point on this curve except for the base points, i.e. $\mathcal{L}_1 \setminus \{\mathcal{P}_2\}$, $\mathcal{L}_2 \setminus \{\mathcal{P}_3^+,\mathcal{P}_3^-\}$ and $\mathcal{L}^+_i \setminus \{\mathcal{P}^+_{i+1}\}$, respectively $\mathcal{L}^-_i \setminus \{\mathcal{P}^-_{i+1}\}$, for $i=3,4,5$. In a Real picture this would be understood as the vector field becoming tangent to the exceptional curve. Here, we will show through a more formal argument that the flow of the vector field cannot pass through the exceptional curve except at the base points. Namely, there exists an auxiliary function, or approximate first integral, which remains bounded at any movable singularity. For the second Painlev\'e equation, this function is known to be
\begin{equation}
\label{W_P2}
W = H + \frac{x}{y},
\end{equation}
where $H$ is the Hamiltonian (\ref{P2_ham}). In certain proofs of the Painlev\'e property for the equation this function is needed to show that actually $y \to \infty$ at a movable singularity. In the context of the space of initial values, we can use $W$ to show that the line at infinity of $\mathbb{P}^2$, and subsequently the exceptional curves introduced by the blow-ups are {\it inaccessible} for the flow of the vector field, apart from at the base points. Namely, one can check, that the logarithmic derivative $\frac{W'}{W}$ remains finite on the line at infinity and the subsequent exceptional lines introduced by the cascade of blow-ups, except at the base points, whereas $W$ itself is infinite on these lines away from the base points. We do not write out the expressions for the function $W$ in all the coordinate charts as these become rather lengthy, but we note that this can be done routinely using computer algebra. In section \ref{sec:cubicHam}, we demonstrate this process for the Hamiltonian system given there by explicitly writing out the respective functions $W$ where this is feasible. The following lemma, using a standard integral estimate, then shows that a solution cannot pass through any of the exceptional lines on which $W$ is infinite, i.e.\ they cannot be reached by analytic continuation of a solution along a finite-length curve.

\begin{lem}
	\label{log_bounded}
	Suppose a function $W(z)$ is defined in the neighbourhood $U$ of a point $z_\ast$ such that the logarithmic derivative $\frac{d}{d z} \log W = \frac{W'}{W}$ is bounded, say by $K$, on $U$. Let $\gamma \subset U$ be a finite-length curve from some point $z_0$ where $W(z_0)$ is finite and non-zero, ending in $z_\ast$. By the estimate,
	\begin{equation*}
		|\log W(z_\ast)| \leq |\log W(z_0)| + \int_{\gamma} \left| \frac{W'}{W} \right| ds \leq |\log W(z_0)| + K \cdot \text{len}(\gamma),
	\end{equation*} 
	$\log W(z_\ast)$, and hence $W(z_\ast)$, is bounded.
\end{lem}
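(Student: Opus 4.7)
The statement of the lemma already exhibits the key inequality, so the proof is essentially unpacking why that chain of inequalities is valid. My plan is as follows.

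First, I would note that since $W(z_0)$ is finite and non-zero, one can fix a branch of the logarithm in a neighbourhood of $W(z_0)$, making $\log W(z_0)$ a well-defined complex number. The quantity $W'/W$, on the other hand, is single-valued on $U$ and, by hypothesis, bounded in absolute value by $K$. In particular it is continuous (it is the logarithmic derivative of an analytic function on $U \setminus W^{-1}(0)$, and the bound forces $W$ to be non-vanishing along the curve, since otherwise $W'/W$ would blow up).

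Next, for any point $z \in \gamma$ with $z \neq z_\ast$, let $\gamma_z$ denote the sub-arc of $\gamma$ from $z_0$ to $z$. Since $\gamma_z$ has finite length and $W'/W$ is continuous on it, the fundamental theorem of calculus gives
\begin{equation*}
\log W(z) - \log W(z_0) = \int_{\gamma_z} \frac{W'(w)}{W(w)}\, dw,
\end{equation*}
where $\log W(z)$ is the analytic continuation of the chosen branch along $\gamma_z$. Taking absolute values, applying the standard $|\int f\, dw| \le \int |f|\, ds$ estimate for path integrals, and using the bound $|W'/W|\le K$, I obtain
\begin{equation*}
\bigl|\log W(z)\bigr| \;\le\; \bigl|\log W(z_0)\bigr| + K\cdot \mathrm{len}(\gamma_z) \;\le\; \bigl|\log W(z_0)\bigr| + K\cdot \mathrm{len}(\gamma).
\end{equation*}

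Finally, letting $z \to z_\ast$ along $\gamma$, the right-hand side is independent of $z$, so the bound passes to the limit, yielding the estimate in the statement. Exponentiating gives $|W(z_\ast)|\le \exp(|\log W(z_0)| + K\cdot \mathrm{len}(\gamma))$, so $W(z_\ast)$ is finite; moreover $|W(z_\ast)| \ge \exp(-|\log W(z_0)| - K\cdot \mathrm{len}(\gamma)) > 0$, so $W$ stays bounded away from $0$ as well.

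The only real subtlety — and arguably the point that deserves a sentence of care — is the branch issue for $\log W$. This is harmless because the integrand $W'/W$ is genuinely single-valued on $U$, so the analytic continuation of $\log W$ along $\gamma_z$ exists and the estimate refers to the absolute value of that continuation; no monodromy obstruction arises. I do not anticipate any other technical obstacle.
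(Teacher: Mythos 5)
Your proposal is correct and follows essentially the same route as the paper, which simply states the integral estimate $|\log W(z_\ast)| \leq |\log W(z_0)| + \int_{\gamma}|W'/W|\,ds \leq |\log W(z_0)| + K\cdot\mathrm{len}(\gamma)$ as the entire argument; your elaboration of the branch choice for $\log W$ and the passage to the limit along sub-arcs $\gamma_z$ merely makes explicit what the paper leaves implicit.
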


In other words, a solution continued along a curve $\gamma \subset \mathbb{C}$, ending in a movable singularity $z_\ast$, has to approach a base point, i.e.\ there exists at least a sequence $(z_n)_{n \in \mathbb{N}} \subset \gamma$, $z_n \to z_\ast$, such that the sequence of points $(u_i(z_n),v_i(z_n))$ or $(U_i(z_n),V_i(z_n))$ tends towards one of the base points. Otherwise we would be in the situation where the solution remains entirely in the region of the phase space where the equations define a regular initial value problem, i.e.\ no singularity can develop.
\end{rem} 

The base point $\mathcal{P}^\pm_7: (U_6^\pm,V_6^\pm)=(0,0)$ in the second chart of system (\ref{system6}) is only present if the condition 
\begin{equation}
\label{P2cond}
2\alpha'(z) \mp \beta''(z) \equiv 0,
\end{equation}
is {\it not} satisfied. This point can be blown up once further, resulting in a system with no further base points. However, the solutions of the resulting system give rise to logarithmic singularities. This behaviour is already visible in the systems $(u_6^{\pm},v_6^{\pm})$: integrating the first equation of system (\ref{system6}) with initial data on the exceptional curve after the last blow-up, $u_6^\pm=0$, and inserting this into the second equations, one obtains
\begin{equation*}
u_6^\pm = \pm (z-z_0) + O((z-z_0)^2), \quad v_6^\pm = (\pm 2\alpha'(z_0) - \beta''(z_0))\log(z-z_0) +  O(z-z_0).
\end{equation*}
In case of the conditions (\ref{P2cond}) being satisfied, an additional cancellation of a factor of $u_6^\pm$ and $V_6^\pm$ occurs in the second, respectively third, equation of system (\ref{system6}), rendering this system a regular initial value problem on the exceptional curves $\mathcal{L}^\pm_6$. Also, in this case the vector field is transversal to these lines.  With initial data $(u_6^\pm(z_0),v_6^\pm(z_0)) = (0,h)$, one obtains an analytic solution
\begin{equation*}
u_6^\pm(z) = \pm (z-z_0) + O((z-z_0)^2), \quad v_6^\pm = h + O(z-z_0),
\end{equation*}
translating into a simple pole for the original variable $y$. 
The conditions (\ref{P2cond}) are exactly the resonance conditions obtained by the Painlev\'e test, combined giving $\beta''(z)=\alpha'(z)=0$. This is the case in which equation (\ref{P2general}) essentially reduces to the second Painlev\'e equation, up to a re-scaling of $z$. We denote by $\mathcal{I}_5(z) = \mathcal{I}_5^+(z) \cup \mathcal{I}_5^-(z) \subset \mathcal{S}_5(z)$ the {\it infinity set}, that is the proper transforms of the line $\mathcal{L}_0 \subset \mathbb{P}^2$ and the exceptional curves $\mathcal{L}_1$, $\mathcal{L}_2$, $\mathcal{L}_3^+$, $\mathcal{L}_3^-$, $\mathcal{L}_4^+$, $\mathcal{L}_4^-$, $\mathcal{L}_5^+$ and $\mathcal{L}_5^-$ from the first $5$ blow-ups of both cascades of base points. Then, at any point of the set $\mathcal{S}_6(z) \setminus \mathcal{I}'_5(z)$, the system (\ref{system6}) defines a regular initial value problem, which justifies the name 'space of initial values' for this set. 

Suppose now that a solution $y(z)$ of the dynamical system, defined in $\bigcup_{z \in \mathbb{C}} \mathcal{S}_6(z)$, has a movable singularity at some point $z_\ast$ and  consider a finite-length path $\gamma \subset \Omega$ with endpoint $z_\ast$, where $\Omega \subset \mathbb{C}$ is a closed neighbourhood of $z_\ast$. We denote the lifted path, i.e.\ the path that the solution along this path traces out in the (extended) phase space, by $\Gamma \subset \bigcup_{z \in \Omega} S_6(z)$. A priori $\Gamma$ can be of finite or infinite length. Let $(z_n)_{n \in \mathbb{N}} \subset \gamma$ be a sequence of points with $z_n \to z_\ast$. Since the phase space (including all the exceptional curves) is compact, there exists a subsequence $(z_{n_k})$ such that $\Gamma(z_{n_k})$ tends to a point $P_\ast \in \mathcal{S}_6(z_\ast)$. By Remark \ref{sequence_remark} and Lemma \ref{log_bounded}, we actually have $P_\ast \in \mathcal{S}_6(z_\ast) \setminus \mathcal{I}'_5(z_\ast)$. Then, by Lemma \ref{Painlemma} we can conclude that the solution, expressed in coordinates of some chart containing $P_\ast$, is analytic at the point $z_\ast$, and therefore in a neighbourhood of $z_\ast$. Thus, the solution converges to the point $P_\ast$ in this chart as $z \to z_\ast$, which corresponds to either an analytic point or a simple pole in the original variable $y$. This also excludes the possibility that $\Gamma$ has infinite length, as the curve $\Gamma$ is the analytic image of the finite-length curve $\gamma$ in this chart.
 
In summary, the procedure of blowing up the base points allows us to single out, from the class of equations (\ref{P2general}) with general coefficients, those equations for which the solutions are free from movable logarithmic singularities. Furthermore, in the absence of logarithmic singularities, the argument in the preceding paragraph essentially establishes an alternative method of proof for the Painlev\'e property of equation (\ref{P2}).

We mention that for the alternative (Okamoto) Hamiltonian $H_{\text{Ok}}$ for equation (\ref{P2}), a different sequence of base points leads to a related space of initial values. Here, there are originally two base points in $\mathbb{P}^2$, one at $(u,v)=(0,0)$, the other at $(U,V) = (0,0)$. One of them can be resolved by $3$ successive blow-ups, the other by blowing up $6$ times, the resulting resonance conditions being equivalent to the ones obtained above.
The procedure also works for the other Painlev\'e equations. For the equation $y''= 6y^2 + \alpha(z)$, $\alpha$ analytic in $z$, one finds, after compactifying the equation on $\mathbb{P}^2$ and blowing up a sequence of $9$ base points, the condition $\alpha''(z) \equiv 0$. If this condition is satisfied, the system defines a regular initial value problem on the exceptional curve from the $9$th blow-up, and the equation essentially reduces to the first Painlev\'e equation $P_I$. Moreover, in this case there is an analytic solution around each point of the space of initial values, which, in the original variable $y(z)$ corresponds to a point where the solution is either analytic or has a double pole. For detailed blow-up calculations see also the work by Duistermaat and Joshi \cite{joshi1} for the first Painlev\'e equation and Howes and Joshi \cite{joshi2} for the second Painlev\'e equation, both performed in so-called Boutroux coordinates.

\section{Differential equations with movable algebraic singularities}
In the papers \cite{Shimomura2007,Shimomura2008}, Shimomura studied certain classes of differential equations with what he called the {\it quasi-Painlev\'e property}. This is a generalisation of the Painlev\'e property in the sense that the solutions of the equations considered may have at worst {\it algebraic poles} as movable singularities. 
\begin{defn}
By an algebraic pole we denote a singularity $z_\ast$ of $y(z)$, which, in a cut neighbourhood of $z_\ast$, can be represented by a convergent Puiseux series,
\begin{equation}
\label{algebraicpole}
y(z) = \sum_{j=0}^\infty c_j (z-z_\ast)^{(j-j_0)/n}, \quad j_0,n \in \mathbb{N}.
\end{equation}
For $n=1$ this includes the notion of an ordinary pole. If the number $n$ is chosen minimal and $n>1$, we say that $y$ has an $n$th-root type algebraic pole at $z_\ast$.
\end{defn}

Shimomura proved that, for the families of equations,
\begin{equation}
\label{ShimomuraEqns}
\begin{aligned}
P_I^{(k)}:& \quad y'' = \frac{2(2k+1)}{(2k-1)^2} y^{2k} +z \quad (k \in \mathbb{N}), \\
\qquad P_{\,I\!I}^{(k)}: & \quad y'' =  \frac{k+1}{k^2} y^{2k+1} + zy + \alpha \quad (k \in \mathbb{N} \setminus \{2\}, \quad \alpha \in \mathbb{C}),
\end{aligned}
\end{equation}
the only types of movable singularities that can occur, by analytic continuation of a local solution along {\it finite-length paths}, are of the algebraic form (\ref{algebraicpole}). For $P_I^{(k)}$, 
\begin{equation}
\label{PIk}
y(z) = (z-z_\ast)^{-\frac{2}{2k-1}} - \frac{(2k-1)^2}{2(6k-1)} z_\ast (z-z_\ast)^2 + h (z-z_\ast)^{\frac{4k}{2k-1}} + \sum_{j}^\infty c_j (z-z_\ast)^{\frac{j}{2k-1}},
\end{equation}
where $h \in \mathbb{C}$ is an integration constant, and for $P_{\,I\!I}^{(k)}$,
\begin{equation}
\label{PIIk}
y(z) = \omega_k (z-z_\ast)^{-\frac{1}{k}} - \frac{k \omega_k z_\ast}{6} (z-z_\ast)^{2-\frac{1}{k}} - \frac{k^2 \alpha}{3k+1} (z-z_\ast)^2 + h (z-z_\ast)^{2+\frac{1}{k}} + \sum_{j}^\infty c_j (z-z_\ast)^{\frac{j}{k}},
\end{equation} 
where again $h$ is an integration constant and $\omega_k \in \{1,e^{i\pi / k}\}$, i.e.\ in this case there are two essentially different types of leading-order behaviour at the singularities. The proofs in \cite{Shimomura2007,Shimomura2008} for the quasi-Painlev\'e property of these equations rely on similar methods as the proofs of the Painlev\'e property for the Painlev\'e equations in \cite{Shimomura2003}. In fact, for $k=1$, the equations $P_I^{(k)}$ and $P_{\,I\!I}^{(k)}$ reduce to the first and second Painlev\'e equations, respectively. 

Already in an earlier (1953) paper, R.A. Smith considered the class of equations
\begin{equation}
\label{SmithEqn}
y''(z) + f(y) y'(z) + g(y) = h(z),
\end{equation}
where $f$ and $g$ are polynomials in $y$. He showed that, under the condition $\deg(g) < \deg(f)$, the only types of movable singularities that can occur by analytic continuation along finite-length paths are algebraic poles of the form
\begin{equation}
\label{smithexpansion}
y(z) = \sum_{j=0}^\infty c_j (z-z_0)^{(j-1)/n}, \quad n = \deg(f).
\end{equation}
Here, as in the cases of equations $P_I^{(k)}$ and $P_{\,I\!I}^{(k)}$, it is easy to verify the existence of formal series solutions of the form (\ref{smithexpansion}), (\ref{PIk}) or (\ref{PIIk}), respectively. Namely, inserting a formal series into the respective equation, one can determine the coefficients recursively without obstruction. A harder problem is to show that {\it all} movable singularities are of this form. As mentioned above, this is similar to the difference in difficulty of showing that an equation passes the Painlev\'e test and showing that the equation has the Painlev\'e property (if it has). The problem we pose is, for a given differential equation, to determine a list of possible types of movable singularities that can occur in solutions of the equation and show that these are the only ones. In the cases of the equations by Smith (\ref{SmithEqn}) and Shimomura (\ref{ShimomuraEqns}), this was shown under the proviso that paths along which we obtain a singularity through analytically continuation, are of finite length. In \cite{smith1}, Smith gave an example of a solution with a singularity not of the form (\ref{smithexpansion}), which can be obtained only by analytic continuation of a certain solution along a path of infinite length. This singularity, at which the solution behaves very differently, is an accumulation point of algebraic singularities of the form (\ref{smithexpansion}).

Departing from the works by Smith and Shimomura, Filipuk and Halburd \cite{filipukhalburd1,filipukhalburd2,filipukhalburd3} studied more general classes of differential equations with movable algebraic poles. In \cite{filipukhalburd1}, the following class of second-order equations is studied, 
\begin{equation}
\label{2ndorderPoly}
y''(z) = \sum_{n=0}^N a_n(z) y^n,
\end{equation}
where the right-hand side is a polynomial in $y$ with analytic coefficients in some domain $\Omega \subset \mathbb{C}$. After a simple transformation, this equation can be brought into the normalised form
\begin{equation}
\label{polyNormal}
y''(z) = \tilde{a}_N y^N + \sum_{n=0}^{N-2} \tilde{a}_n(z) y^n,
\end{equation}
with a conveniently chosen constant $\tilde{a}_N \in \mathbb{C}$, and where the $y^{N-1}$ term is now absent. By inserting into equation (\ref{polyNormal}) a formal series expansion of the form
\begin{equation}
\label{formalseries}
y(x) = \sum_{j=0}^\infty c_j (z-z_0)^{(j-2)/N},
\end{equation}
and recursively computing the coefficients $c_j$, one finds a necessary condition for the singularities of the solution to be algebraic. Namely, the recurrence relation is of the form
\begin{equation}
\label{Nrecurrence}
(j+N-1)(j-2N-2) c_j = P_j(c_{0},c_{1},\dots ,c_{j-1}), \quad j=1,2,\dots,
\end{equation}
where each $P_j$, $j=1,2,\dots$ is a polynomial in all the previous coefficients $c_0,\dots,c_{j-1}$. The coefficient $c_{2N+2}$ cannot be determined in this way and the recurrence relation (\ref{Nrecurrence}) is satisfied if and only if $P_{2N+2}$ is identically zero, in which case $c_{2N+2}$ is a free parameter. This {\it resonance condition}, $P_{2N+2} \equiv 0$, is necessary for the existence of the formal algebraic series solutions (\ref{formalseries}). Note that each formal series solution (\ref{formalseries}), with distinct leading-order behaviour, gives rise to one resonance condition. By expanding the coefficients $\tilde{a}_n(z)$, $n=0,\dots,N-2$ in Taylor series, one can show that the resonance conditions are equivalent to $\tilde{a}_{N-2}''(z)=0$ for even $N$, plus an additional differential relation between the coefficient functions in the case when $N$ is odd. The main result in \cite{filipukhalburd1} is that all resonance conditions being satisfied is also {\it sufficient} for all movable singularities of any solution of the equation, reachable by analytic continuation along finite length curves, to be algebraic poles of the form (\ref{formalseries}). This is essentially achieved in two steps. First, by constructing a certain auxiliary function, or approximate first integral for the equation, similar to the function $W$ in (\ref{W_P2}), which remains bounded in the vicinity of any movable singularity. Secondly, by formally constructing regular initial value problems from these bounded quantities in certain transformed variables. Regarding the second step, we show in this article how these regular initial value problems can be obtained directly by constructing the space of initial values for the equation. Although resulting in lengthy expressions, best dealt with using computer algebra, this process yields explicit equations, thus almost automating the process of finding the regular initial value problems. Although the auxiliary functions from the first step above are not required to compute the space of initial values, we will still need them to show that certain lines in this space cannot be reached by any solution.

In the following sections, we will construct the analogue of the space of initial values for some of the equations in the class (\ref{polyNormal}), namely the cases $N=4$ and $N=5$, explicitly computing the regular initial value problem at each point of this compact space, away from the exceptional divisors introduced by the blow-ups. We will need the auxiliary functions mentioned above to show that the exceptional divisors are inaccessible for the solution, using Lemma \ref{log_bounded}. To obtain a regular initial value problem, an additional change of the dependent and independent variables is necessary after the ultimate blow-up. Furthermore, with the approach in this article we can show that, for these equations and also for the Hamiltonian systems considered in Section \ref{sec:HamiltonianSystems}, all finitely reachable movable singularities are algebraic poles, i.e.\ these equations indeed have the quasi-Painlev\'e property. This is due to the fact that for these equations, blowing up the base points is a finite procedure, i.e.\ the sequence of base points terminates and the indeterminacies can be resolved completely. We will see that, in the resulting compact space, a solution approaching the singularity has a limit point somewhere on the exceptional curve after the last blow-up, where the system defines a regular initial value problem, after a change in dependent and independent variable. By Lemma \ref{Painlemma} we can conclude that there exists an analytic solution near this point, which, transformed back into the original variables, results in an algebraic pole.

The class of second-order equations (\ref{2ndorderPoly}) is contained in a wider class of polynomial Hamiltonian systems studied by one of the authors \cite{Kecker2016},
\begin{equation*}
H(z,x,y) = x^M + y^N + \sum_{0 < i N + j M < MN} \alpha_{ij}(z) x^i y^j,
\end{equation*}
where the coefficient functions $\alpha_{i,j}(z)$ are analytic in some common domain $\Omega \subset \mathbb{C}$. Also here, under a number of resonance conditions, which can be obtained either through a Painlev\'e test involving algebraic series, or through constructing the analogue of the space of initial values, the solutions of a system in this class can be shown to have only certain algebraic poles as movable singularities.

In the case where some of the resonance conditions are not satisfied, a formal algebraic series expansion with the corresponding leading-order behaviour does not exist. This can be remedied only by the introduction of logarithmic terms $\log(z-z_0)$ in the series expansions of the solutions. In this case, performing the sequence of blow-ups leads to a space in which, although the indeterminacies of the vector field defined by the equation have been resolved, the system in general does not define regular initial value problems at any point of the infinity set. With the procedure described in this article we can recover the conditions under which the respective classes of equations are free from logarithmic branch points.

\section{Second-order equation with polynomial right-hand side of degree $4$}
\label{sec:degree4}
We will now apply the procedure outlined in Section \ref{sec:InitialValueSpace} to the class of equations
\begin{equation}
\label{degree4eqn}
y''(z) = \frac{5}{2} y^4 + \alpha(z) y^2 + \beta(z) y + \gamma(z),
\end{equation}
extending the phase space of $(y,y')=(y,x)$ from $\mathbb{C}^2$ to $\mathbb{P}^2$ and resolving the base points by successive blow-ups.
The factor of $\frac{5}{2}$ is chosen for convenience here to avoid large numerical constants in the calculations, and any $y^3$ term has been transformed away. As shown in \cite{filipukhalburd1}, a necessary and sufficient condition for all singularities of this equation to be algebraic poles is $\alpha''(z) \equiv 0$, i.e.\ $\alpha$ is either a linear function in $z$ or constant. This result was obtained by introducing an auxiliary function, which in our normalisation of the equation is given by
\begin{equation}
	\label{deg4_aux}
	W = \frac{1}{2} (y')^2 - \frac{1}{2} y^5 - \frac{\alpha(z)}{3} y^3 - \frac{\beta(z)}{2} y^2 - \gamma(z) y + \left( \sum_{k=1}^3 \frac{\xi_k(z)}{y^k} \right) y',
\end{equation}
which is essentially the Hamiltonian of the equation plus corrections given in terms the functions $\xi_1,\xi_2,\xi_3$. By the process described in \cite{filipukhalburd1}, these can be computed as
\begin{equation*}
\xi_1(z) = \frac{2}{9} \alpha'(z), \quad \xi_2(z) = \beta'(z), \quad \xi_3(z) = \frac{4}{27} \alpha(z) \alpha'(z) - 2\gamma'(z),
\end{equation*}
in which case $W$ is shown to remain bounded at any movable singularity, which is established by showing that $W$ satisfies a first-order differential equation of the form 
\begin{equation*}
	W' = P(z,1/y) W + Q(z,1/y)y' + R(z,1/y),
\end{equation*}
where $P$, $Q$ and $R$ are polynomials in their last argument.

We will now recover the condition $\alpha''(z)=0$ for the existence of algebraic singularities using an appropriate cascade of blow-ups. After that, we will use the function $W$ defined in (\ref{deg4_aux}), in conjunction with Lemma \ref{log_bounded}, to show that certain exceptional curves arising from the blow-ups cannot be reached by the solution. This will allow us to conclude that the algebraic singularities are the only ones that can occur in the solutions of the equation.

To perform the blow-ups, the equation is first extended to complex projective space $\mathbb{P}^2$ by introducing homogeneous coordinates as in (\ref{homcoords}) above. The system of equations in the new coordinates is presented as follows:
\begin{align*}
u'(z) &=-\frac{2 u^2 v^2 \alpha (z)+2 u^3 v \beta (z)+2 u^4 \gamma (z)+5 v^4}{2 u^2}, \\
v'(z) &=-\frac{2 u^2 v^3 \alpha(z)+2 u^3 v^2 \beta (z)+2 u^4 v \gamma (z)-2 u^3+5 v^5}{2 u^3}, \\
U'(z) &=-\frac{2 U^2 V^3-2 V^2 \alpha (z)-2 V^3 \beta (z)-2 V^4 \gamma (z)-5}{2 V^3}, \\
V'(z) &=-U V.
\end{align*}

We see that there is an initial base point in the first chart at $(u,v) = (0,0)$. This indeterminacy can be resolved by a cascade of $14$ blow-ups, after which one finds regular initial value problems on the exceptional curve introduced by the last blow-up, but only after an additional change of dependent and independent variables. The cascade of base points is as follows:
\begin{align*}
{} & \mathcal{P}_1: (u,v) = \left(\frac{1}{x},\frac{y}{x} \right) = (0,0) \quad \leftarrow \quad \mathcal{P}_2: (U_1,V_1) =  \left( \frac{1}{y}, \frac{y}{x} \right) = (0,0) \\
\leftarrow \quad & \mathcal{P}_3: (u_2,v_2) = \left( \frac{1}{y}, \frac{y^2}{x}\right) = (0,0) \quad
\leftarrow \quad \mathcal{P}_4: (U_3,V_3) = \left( \frac{x}{y^3}, \frac{y^2}{x} \right) = (0,0) \\
\leftarrow \quad & \mathcal{P}_5: (u_4,v_4) = \left( \frac{x}{y^3} , \frac{y^5}{x^2} \right) = (0,1) \quad \leftarrow \quad \mathcal{P}_6: (u_5,v_5) = \left( \frac{x}{y^3}, \frac{y^3 \left(y^5 - x^2 \right)}{x^3} \right) = (0,0) \\
\leftarrow \quad & \mathcal{P}_7: (u_6,v_6) = \left( \frac{x}{y^3}, \frac{y^6 \left(y^5- x^2\right)}{x^4} \right) = (0,0) \quad \leftarrow \quad \mathcal{P}_8: (u_7,v_7) = \left( \frac{x}{y^3}, \frac{y^9 \left( y^5 - x^2 \right)}{x^5} \right) = (0,0) \\
\leftarrow \quad & \mathcal{P}_9: (u_8,v_8) = \left( \frac{x}{y^3}, \frac{y^{12} \left(y^5-x^2\right)}{x^6} \right) = \left( 0, -\frac{2}{3}\alpha(z) \right) \\
\leftarrow \quad & \mathcal{P}_{10}: (u_9,v_9) = \left( \frac{x}{y^3} , \frac{y^3 \left(2 x^6 \alpha (z)-3 x^2 y^{12}+3 y^{17}\right)}{3 x^7} \right) = (0,0) \\
\leftarrow \quad & \mathcal{P}_{11}: (u_{10},v_{10}) = \left( \frac{x}{y^3}, \frac{y^6 \left(2 x^6 \alpha (z)-3 x^2 y^{12}+3 y^{17}\right)}{3 x^8} \right) = (0,-\beta(z)) \\
\leftarrow \quad & \mathcal{P}_{12}: (u_{11},v_{11}) = \left( \frac{x}{y^3}, \frac{y^3 \left(3 x^8 \beta (z)+2 x^6 y^6 \alpha (z)-3 x^2 y^{18}+3 y^{23}\right)}{3 x^9} \right) = \left( 0, \frac{4}{9}\alpha'(z) \right) \\
\leftarrow \quad & \mathcal{P}_{13}: (u_{12},v_{12}) = \left( 0, \frac{4}{3}\alpha(z)^2 - 2 \gamma(z) \right) \\ & = \left( \frac{x}{y^3}, \frac{y^3 \left(6 x^6 y^9 \alpha (z)+9 x^8 y^3 \beta (z)-4 x^9 \alpha '(z)-9 x^2 y^{21}+9 y^{26}\right)}{9 x^{10}} \right)  \\
\leftarrow \quad & \mathcal{P}_{14}: (u_{13},v_{13}) = (0,2\beta'(z)) \\ & = \left( \frac{x}{y^3}, -\frac{y^3 \left(12 x^{10} \alpha (z)^2-18 x^{10} \gamma (z)+4 x^9 y^3 \alpha '(z)-6 x^6 y^{12} \alpha (z)-9 x^8 y^6 \beta
	(z)+9 x^2 y^{24}-9 y^{29}\right)}{9 x^{11}} \right).
\end{align*}
After blowing up $\mathcal{P}_{14}$, the differential system is of the form
\begin{equation}
\label{system14}
\begin{aligned}
u_{14}' &= \frac{-81 + p_{14,1}(z,u_{14},v_{14})}{2 u_{14}^2 \cdot  d(z,u_{14},v_{14})^2}, \\
v_{14}' &= \frac{-36 \alpha''(z) + p_{14,2}(z,u_{14},v_{14})}{u_{14}^3 \cdot d(z,u_{14},v_{14})^2}, \\
U_{14}' &= \frac{36 \alpha''(z) + P_{14,1}(z,U_{14},V_{14})}{U_{14} V_{14}^3 \cdot D(z,U_{14},V_{14})^2}, \\
V_{14}' &= \frac{-81 + P_{14,2}(z,U_{14},V_{14})}{2 U_{14}^3 V_{14}^2 \cdot D(z,U_{14},V_{14})^2},
\end{aligned}
\end{equation}
where $p_{14,i}$ and $P_{14,i}$, $i=1,2$ are polynomials in the variables $u_{14},v_{14}$ and $U_{14},V_{14}$, respectively, so that on the exceptional curve $L_{14}: \{u_{14}=0\} \cup \{V_{14}=0\}$, introduced by the last blow-up, we have $p_{14,i}(z,0,v_{14}) = 0 = P_{14,i}(z,U_{14},0)$. 

The zero set of the denominators $d(z,u_{14},v_{14})$ and $D(z,U_{14},V_{14})$ of (\ref{system14}) is also called the {\it exceptional divisor}, representing the set $\mathcal{I}_{13}'(z)$ in these coordinates, that is the union of the proper transforms of the exceptional curves $\mathcal{L}_1,\dots,\mathcal{L}_{13}$ introduced by the first $13$ blow-ups together with the line at infinity $\mathcal{L}_0 = I \setminus{\mathcal{P}_1} \subset \mathbb{P}^2$,
\begin{equation*}
\begin{aligned}
d &= 9+9 u_{14}^{10} v_{14}-6 u_{14}^4 \alpha +12 u_{14}^8 \alpha ^2-9 u_{14}^6 \beta -18 u_{14}^8 \gamma +4
u_{14}^7 \alpha '+18 u_{14}^9 \beta', \\
D &= 9+9 U_{14}^9 V_{14}^{10}-6 U_{14}^4 V_{14}^4 \alpha +12 U_{14}^8 V_{14}^8 \alpha ^2-9 U_{14}^6 V_{14}^6 \beta -18 U_{14}^8 V_{14}^8 \gamma +4 U_{14}^7 V_{14}^7 \alpha '+18 U_{14}^9 V_{14}^9 \beta'.
\end{aligned}
\end{equation*}
Since all the blow-ups are bi-rational transformations, one can always solve for the original coordinates, so we can give the dependence of $y$ on $u_{14},v_{14}$, as follows:

\begin{equation}
\label{coord14}
y = u_{14}^{-2} \left(1-\frac{2}{3} u_{14}^4 \alpha +u_{14}^6 \left(-\beta +u_{14} \left(\frac{4 \alpha'}{9}+\frac{1}{3} u_{14} \left(4 \alpha^2-6 \gamma +3 u_{14} \left(u_{14} v_{14}+2 \beta'\right)\right)\right)\right)\right)^{-1}.
\end{equation}
Integrating the system (\ref{system14}) when $\alpha''(z) \neq 0$ would result in logarithmic behaviour for $v_{14}$, since, to leading order,
\begin{equation*}
u_{14} = \sqrt[3]{-\frac{3}{2}}(z-z_0)^{1/3} + O \left( (z-z_0)^{2/3} \right),
\end{equation*}
and inserting this into the second equation of would result in 
\begin{equation*}
v_{14}' = \frac{8}{27} \frac{\alpha''(z)}{z-z_0} + O\left( (z-z_0)^{-2/3} \right),
\end{equation*}
from which the logarithmic behaviour $v_{14} = \frac{8}{27} \alpha''(z_0) \log(z-z_0) + O(z-z_0)$ follows. As discussed above, $\alpha''(z) \equiv 0$ is the resonance condition, where the system admits algebraic series expansions. In this case, a cancellation of one factor of $u_{14}$ resp. $V_{14}$ occurs in the second and third equation of system (\ref{system14}), which becomes
\begin{equation}
\label{afterP14blowup_res}
\begin{aligned}
u_{14}' &= \frac{-81 + p_{14,1}(z,u_{14},v_{14})}{2 u_{14}^2 \cdot  d(z,u_{14},v_{14})^2}, \\
v_{14}' &= \frac{72 \alpha(z) \alpha'(z) + 162 \gamma'(z) + \tilde{p}_{14,2}(z,u_{14},v_{14})}{u_{14}^2 \cdot d(z,u_{14},v_{14})^2},\\
U_{14}' &= \frac{72 \alpha(z) \alpha'(z) + 162 \gamma'(z) + \tilde{P}_{14,1}(z,U_{14},V_{14})}{U_{14} V_{14}^2 \cdot D(z,U_{14},V_{14})^2}, \\
V_{14}' &= \frac{-81 + P_{14,2}(z,U_{14},V_{14})}{2 U_{14}^3 V_{14}^2 \cdot D(z,U_{14},V_{14})^2},
\end{aligned}
\end{equation}
where $\tilde{p}_{14,2}$ and $\tilde{P}_{14,1}$ are polynomials in $u_{14},v_{14}$ resp. $U_{14},V_{14}$ with $\tilde{p}_{14,2}(z,0,v_{14})=0=\tilde{P}_{14,1}(z,U_{14},0)$. In this case, the vector field becomes transversal to the exceptional line $\mathcal{L}_{14} : \{u_{14}=0\} \cup \{V_{14}=0\}$, and the system can be integrated, to leading order, e.g.\ in the coordinates $u_{14},v_{14}$ as follows,
\begin{equation*}
\begin{aligned}
u_{14} &= \sqrt[3]{-\frac{3}{2}}(z-z_0)^{1/3} + O\left( (z-z_0)^{2/3} \right), \\
v_{14} &= h + \sqrt[3]{12} \left(\frac{8}{9} \alpha(z_0)\alpha'(z_0) + 2\gamma'(z_0)\right) (z-z_0)^{1/3} + O((z-z_0)^{2/3}),
\end{aligned}
\end{equation*}
where $h$ is the second integration constant (besides $z_0$). In this way, every point on the line $\mathcal{L}_{14}$ introduced by the last blow-up, parametrised by $(u_{14},v_{14})=(0,h)$, gives rise to an algebraic series solution. Denoting by $\mathcal{S}_{14}(z)$ the space obtained by blowing up the sequence of $14$ base points, which are themselves $z$-dependent, and the set $\mathcal{I}_{13}(z)$ as above, the analogue of the space of initial values can be defined as $\mathcal{S}_{14}(z) \setminus \mathcal{I}_{13}'(z)$.
Thus, away from the set $\mathcal{I}_{13}'(z)$, every point in the space we have constructed gives rise to an initial value problem with either analytic solutions or power series solutions in $(z-z_0)^{1/3}$. The latter solutions are transversal to the exceptional curve $\mathcal{L}_{14}$ from the last blow-up. 

\begin{rem}
	\label{system14remark}
	In addition to the blow-up calculations for the vector field, it is important to show that, in each step, the solution cannot pass through the exceptional curve $\mathcal{L}_i$ at any point other than the base point $\mathcal{P}_{i+1}$. This is achieved by re-expressing the auxiliary function $W$ from (\ref{deg4_aux}) in the blown-up coordinates and verifying that the logarithmic derivative $\frac{W'}{W}$ is bounded in the neighbourhood of any point on the exceptional curve $\mathcal{L}_i \setminus \{ \mathcal{P}_{i+1} \}$, whereas $W$ itself is infinite there. Lemma \ref{log_bounded} then shows that the exceptional curve is inaccessible for the flow of the vector field other than at the base point. This is ascertainment for the intuitive notion that after each blow-up, the resulting vector field is infinite on the exceptional curve $\mathcal{L}_i$ and becomes tangent to this curve $\mathcal{L}_i$ when approached away from the base point $\mathcal{P}_{i+1}$. Although we do not give the detailed (and lengthy) expressions for $\frac{d}{dz} \log(W)$ here, we note that the above mentioned properties can be checked routinely using computer algebra.
\end{rem}
We can now conclude with the statement that, in the case of the condition $\alpha''(z)=0$ being satisfied, the only singularities are algebraic.
\begin{prop}
\label{degree4prop}
The class of equations
\begin{equation*}
y'' = y^4 + (az+b) y^2 + \beta(z) y + \gamma(z),
\end{equation*}
where $\beta$ and $\gamma$ are analytic functions and $a,b \in \mathbb{C}$, has the quasi-Painlev\'e property, with cubic-root type algebraic poles.
\end{prop}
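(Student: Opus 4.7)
The plan is to mimic the argument given earlier for the second Painlevé equation, using the 14-step blow-up cascade already set up in Section \ref{sec:degree4} together with the auxiliary function $W$ in \eqref{deg4_aux}. The hypothesis $\alpha(z)=az+b$ is exactly $\alpha''(z)\equiv 0$, so the resonance condition identified in the preceding computation is automatically satisfied, and the differential system near the final exceptional curve takes the regular form \eqref{afterP14blowup_res}.

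First I would fix notation: let $\mathcal{S}_{14}(z)$ be the total space obtained from $\mathbb{P}^2$ by the 14 blow-ups at the $z$-dependent base points $\mathcal{P}_1(z),\dots,\mathcal{P}_{14}(z)$, and let $\mathcal{I}_{13}'(z)\subset\mathcal{S}_{14}(z)$ be the proper transform of $\mathcal{L}_0\cup\mathcal{L}_1\cup\cdots\cup\mathcal{L}_{13}$, i.e.\ the zero set of the denominators $d$ and $D$ in \eqref{afterP14blowup_res}. Suppose a local solution $y(z)$ has a movable singularity at $z_\ast$, reached by analytic continuation along a finite-length curve $\gamma\subset\Omega$ ending at $z_\ast$, where $\Omega$ is a compact neighbourhood of $z_\ast$ on which $\beta,\gamma$ are analytic. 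Let $\Gamma\subset\bigcup_{z\in\Omega}\mathcal{S}_{14}(z)$ denote the lift of the solution. Since $\mathcal{S}_{14}(z_\ast)$ is compact, any sequence $z_n\to z_\ast$ along $\gamma$ admits a subsequence with $\Gamma(z_{n_k})\to P_\ast\in\mathcal{S}_{14}(z_\ast)$.

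The crucial step is to show $P_\ast\notin\mathcal{I}_{13}'(z_\ast)$. For this I would re-express the auxiliary function $W$ from \eqref{deg4_aux} in each of the coordinate charts introduced by the cascade of blow-ups, and verify, as indicated in Remark \ref{system14remark}, that at every point of the exceptional set $\mathcal{I}_{13}'(z)\setminus\{\mathcal{P}_{i+1}(z)\}_{i}$ the function $W$ is infinite while its logarithmic derivative $W'/W$ remains bounded in a neighbourhood. Lemma \ref{log_bounded} then rules out reaching any such point along the finite-length path $\gamma$, forcing $P_\ast\in\mathcal{S}_{14}(z_\ast)\setminus\mathcal{I}_{13}'(z_\ast)$. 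The main labour is this chart-by-chart bookkeeping, which is lengthy but mechanical with computer algebra; the key structural input is that $W$ satisfies a first-order equation $W'=P(z,1/y)W+Q(z,1/y)y'+R(z,1/y)$ with $P,Q,R$ polynomial in $1/y$, so $W'/W$ is governed by $P$ plus lower-order contributions.

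With $P_\ast$ in the regular locus, I would work in the $(u_{14},v_{14})$ or $(U_{14},V_{14})$ chart containing $P_\ast$. Because $\alpha''\equiv 0$, the system takes the form \eqref{afterP14blowup_res}; dividing the $v_{14}$-equation by the $u_{14}$-equation (equivalently, treating $u_{14}$ as the new independent variable, a legitimate change since $u_{14}'\neq 0$ near $u_{14}=0$) gives a system analytic at $P_\ast$. By Cauchy's existence theorem (Lemma \ref{Painlemma} applied in the rescaled variables), there is a unique analytic solution through $P_\ast$ in the new independent variable $t=(z-z_\ast)^{1/3}$, yielding
\begin{equation*}
u_{14}(z)=\sqrt[3]{-\tfrac{3}{2}}\,(z-z_\ast)^{1/3}+O\!\bigl((z-z_\ast)^{2/3}\bigr),\qquad v_{14}(z)=h+O\!\bigl((z-z_\ast)^{1/3}\bigr),
\end{equation*}
for some $h\in\mathbb{C}$. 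Inserting these into the birational expression \eqref{coord14} for $y$ produces a convergent Puiseux expansion of $y$ in $(z-z_\ast)^{1/3}$ with leading term $(z-z_\ast)^{-2/3}$, i.e.\ a cubic-root algebraic pole. The same argument shows $\Gamma$ has finite length, completing the identification of $z_\ast$ as a movable singularity of the asserted type; together with the exclusion of every other limiting behaviour this establishes the quasi-Painlevé property.
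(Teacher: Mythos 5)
Your proposal follows essentially the same route as the paper's own proof: compactness of $\mathcal{S}_{14}(z_\ast)$ to extract a limit point $P_\ast$ of the lifted solution, the auxiliary function $W$ of \eqref{deg4_aux} with Lemma \ref{log_bounded} to exclude $\mathcal{I}_{13}'(z_\ast)$, and the interchange of dependent and independent variables (taking $u_{14}$, resp.\ $V_{14}$, as the new independent variable) to obtain a regular initial value problem whose solution, once inverted, gives the cubic-root Puiseux expansion and, via \eqref{coord14}, the algebraic pole of $y$. The only step you elide is the explicit dichotomy showing that $P_\ast$ must actually lie on $\mathcal{L}_{14}$: a priori $P_\ast$ could be a point of $\mathcal{S}_{14}(z_\ast)\setminus\mathcal{I}_{13}'(z_\ast)$ away from the last exceptional curve, in which case Lemma \ref{Painlemma} would force $y$ to be analytic at $z_\ast$, contradicting the assumed singularity --- the paper disposes of this in one sentence, and it is needed before you may assume $u_{14}(P_\ast)=0$ and write $u_{14}\sim(z-z_\ast)^{1/3}$.
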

\begin{proof}
Making a change in dependent and independent variables, the system (\ref{afterP14blowup_res}) becomes
\begin{equation}
\label{system14_z}
\begin{aligned}
\frac{d z}{d u_{14}} &= \frac{2 u_{14}^2 \cdot  d(z,u_{14},v_{14})^2}{-81 + p_{14,1}(z,u_{14},v_{14})}, \\
\frac{d v_{14}}{d u_{14}} &= 2 \cdot \frac{72 \alpha(z) \alpha'(z) + 162 \gamma'(z) + \tilde{p}_{14,2}(z,u_{14},v_{14})}{-81 + p_{14,1}(z,u_{14},v_{14})}, \\
\frac{d z}{d V_{14}} &= \frac{2 U_{14}^3 V_{14}^2 \cdot D(z,U_{14},V_{14})^2}{-81 + P_{14,2}(z,U_{14},V_{14})}, \\
\frac{d U_{14}}{d V_{14}} &= 2U_{14}^2 \cdot \frac{72 \alpha(z) \alpha'(z) + 162 \gamma'(z) + \tilde{P}_{14,1}(z,U_{14},V_{14})}{-81 + P_{14,2}(z,U_{14},V_{14})},
\end{aligned}
\end{equation}
which, for initial data $(z,u_{14},v_{14})=(z_0,0,h)$ resp. $(z,U_{14},V_{14}) = (z_0,H,0)$ on the exceptional curve $\mathcal{L}_{14}$, defines a regular initial value problem of $(z,v_{14})$ in $u_{14}$ and of $(z,U_{14})$ in $V_{14}$, respectively. Let $\gamma \subset \mathbb{C}$ be a finite-length curve ending in a movable singularity $z_\ast$. The lifted curve in the phase space is denoted by $\Gamma(z)$. Let $(z_n) \subset \gamma$, $z_n \to z_\ast$ be a sequence along the curve $\gamma$. Due to the extended phase space (with all the exceptional curves) being compact, there exists a subsequence $(z_{n_k})$ such that the lifted sequence $\Gamma(z_{n_k})$ converges to a point $P_\ast \in S_{14}(z_\ast)$.  By Remark \ref{system14remark}, we actually have $P_\ast \in S_{14} \setminus I'_{13}(z_\ast)$. If $P_\ast \notin \mathcal{L}_{14}$, we would be in the situation where the original system defines a regular initial value problem, and thus would be analytic, contradicting the assumption of a singularity at $z_\ast$. Hence, we must have $P_\ast \in \mathcal{L}_{14}$. But here system (\ref{system14_z}) has an analytic solution $(z,v_{14})$ of the form
\begin{equation*}
z(u_{14}) = z_\ast - \frac{2}{3} u_{14}^3 + O(u_{14}^4), \quad v_{14}(u_{14}) = h + 2\left(\frac{8}{9} \alpha(z_\ast)\alpha'(z_\ast) + 2\gamma'(z_\ast)\right) u_{14} + O(u_{14}^2), 
\end{equation*}
or similar for $(z,U_{14})$. Inverting these power series one obtains an algebraic series expansion for $(u_{14},v_{14})$ in terms of $(z-z_\ast)^{1/3}$, which by (\ref{coord14}) corresponds to a cubic-root type algebraic pole in the original variable $y(z)$.
\end{proof}

\section{Second-order equation with polynomial right-hand side of degree $5$}
\label{sec:degree5}
As the lowest degree example of the equation $y'' = a_N y^N + \sum_{n=0}^{N-1} a_n(z) y^n$ with odd $N>3$ we consider the case $N=5$,
\begin{equation}
\label{degree5eqn}
y''(z) = 3y^5 + \alpha(z)y^3 + \beta(z)y^2 + \gamma(z)y + \delta(z),
\end{equation}
where the coefficient $a_5=3$ is chosen for computational convenience. As was shown in \cite{filipukhalburd1}, in the odd $N$ case, two resonance conditions are necessary and sufficient for the solutions of the equation to have algebraic poles as movable singularities. These can be found by inserting the formal series expansion
\begin{equation}
\label{degree5sol}
y(z) = \sum_{j=0}^\infty c_j (z-z_0)^{(j-1)/2}
\end{equation}
into equation (\ref{degree5eqn}) and computing, for each possible leading coefficient $c_0$, the obstruction in the recurrence relation (\ref{Nrecurrence}) to determine the coefficients $c_j$, $j=1,2,\dots$. In the odd $N$ case, there are two essentially different leading-order behaviours corresponding to the initial coefficients $c_{0} \in \{1,-1\}$, yielding two distinct resonances. In this case, these conditions are equivalent to $\alpha''(z) \equiv 0$ and $(\gamma(z)^2 + 4\alpha(z))'\equiv 0$. We will now show that we can recover these conditions through the construction of the analogue of the space of initial values for equation (\ref{degree5eqn}), and moreover, that the singularities of the form (\ref{degree5sol}) are the only type of movable singularity for equation (\ref{degree5eqn}).

Extending the phase space of the equation in the variables $(y,x)=(y,y')$ to $\mathbb{P}^2$ via the relations $[1:y:x] = [u:v:1] = [V:1:U]$, one finds the following systems of equations:
\begin{equation*}
\begin{aligned}
u'(z)& =-\frac{u^2 v^3 \alpha (z)+u^3 v^2 \beta (z)+u^4 v \gamma (z)+u^5 \delta (z)+3 v^5}{u^3}, \\
v'(z) &=-\frac{u^2v^4 \alpha (z)+u^3 v^3 \beta (z)+u^4 v^2 \gamma (z)+u^5 v \delta (z)-u^4+3 v^6}{u^4}, \\
U'(z) &= \frac{3-U^2 V^4+V^2 \alpha (z)+V^3 \beta (z)+V^4 \gamma (z)+V^5 \delta (z)}{V^4}, \\ 
V'(z) &= -U V.
\end{aligned}
\end{equation*}
There is a single base point in the chart $u,v$ at $(u,v)=(0,0)$. Here we describe the sequence of blow-ups needed to completely resolve this base point. Similar to the case of the second Painlev\'e equation in Section \ref{sec:InitialValueSpace}, the sequence of base points branches into two cascades after the third blow-up, so that a total of $15$ blow-ups is required. We denote coordinates and points of the subsequent blow-ups with superscripts $\pm$, the complete cascade being as follows:
\begin{equation*}
\begin{aligned}
{} & \mathcal{P}_1: (u,v) = \left(\frac{1}{x},\frac{y}{x} \right) = (0,0) \quad \leftarrow \quad \mathcal{P}_2: (U_1,V_1) = \left( \frac{1}{y}, \frac{y}{x} \right) = (0,0) \\
\leftarrow \quad & \mathcal{P}_3: (u_2,v_2) = \left( \frac{1}{y}, \frac{y^2}{x} \right) = (0,0) \quad \leftarrow \quad \mathcal{P}^{\pm}_4: (u_3,v_3) = \left( \frac{1}{y}, \frac{y^3}{x} \right) = (0, \pm 1) \\
\leftarrow \quad & \mathcal{P}^{\pm}_5: (u_4^\pm,v_4^\pm) = \left( \frac{1}{y} , \frac{y \left( y^3 \mp x \right)}{x} \right) = (0,0) \quad \leftarrow \quad \mathcal{P}^{\pm}_6: (u_5^\pm,v_5^\pm) = \left( \frac{1}{y} , \frac{y^2 \left( y^3 \mp x \right)}{x} \right) = \left(0, \mp \frac{\alpha}{4} \right) \\
\leftarrow \quad & \mathcal{P}^{\pm}_7: (u_6^\pm,v_6^\pm) = \left( \frac{1}{y} , \frac{4 y^6 \pm y (\alpha x - 4 y^2 x)}{4 x} \right) = \left( 0, \mp \frac{\beta}{3} \right) \\
\leftarrow \quad & \mathcal{P}^{\pm}_8: (u_7^\pm,v_7^\pm) = \left( \frac{1}{y}, \frac{12 y^7 \pm y \left( 3 x y \alpha + 4 x \beta - 12 x y^3 \right)}{12 x} \right) = \left( 0, \frac{1}{32} \left(4 \alpha' \pm \left( 3 \alpha^2-16 \gamma \right)\right) \right) \\
\leftarrow \quad & \mathcal{P}^{\pm}_9: (u_8^\pm,v_8^\pm) = \left( \frac{1}{y} , \frac{y \left(96 y^7 -12 x\alpha' \mp \left( 96 x y^4 -24 x y^2 \alpha +9 x \alpha^2-32 x y \beta -48 x \gamma \right) \right)}{96 x} \right) \\
& \qquad \qquad \qquad = \left(0, \frac{1}{3} \beta' \pm \left( \frac{1}{4} \alpha \beta - \delta \right) \right).
\end{aligned}
\end{equation*}
Due to the bi-rational nature of the blow-ups, the collected coordinate transformations in each cascade of blow-ups can be inverted, resulting in
\begin{equation}
\label{coord9inv}
\begin{aligned}
y &= \frac{1}{u^\pm_9}, \qquad x = y' = (u^\pm_9)^{-3} \left(1+ (u^\pm_9)^2 \left(-\frac{\alpha}{4} +u^\pm_9 \left(-\frac{\beta}{3} + u^\pm_9 \left(\frac{1}{32} \left(3 \alpha^2-16 \gamma +4 \alpha'\right) \right. \right. \right. \right. \\ & \qquad \qquad \qquad \left. \left. \left. \left. +u^\pm_9 \left(u^\pm_9 v^\pm_9+\frac{1}{12} \left(3 \alpha \beta-12 \delta+4\beta'\right)\right)\right)\right)\right)\right)^{-1}.
\end{aligned}
\end{equation}
In the coordinates after blowing up $\mathcal{P}^\pm_9$, the system is of the following form: 
\begin{equation}
\label{system9}
\begin{aligned}
u_9^\pm{}' & = \frac{-96}{u_9^\pm \cdot d^\pm(z,u_9^\pm,v_9^\pm)}, \\
v_9^\pm{}' & = \frac{\mp 12\alpha''(z) -6\alpha(z)\alpha'(z) + 48\gamma(z) + p^\pm_{9,2}(z,u_9^\pm,v_9^\pm) }{(u_9^\pm)^2 \cdot d^\pm(z,u_9^\pm,v_9^\pm)}, \\
U_9^\pm{}' & = \frac{\pm 12\alpha''(z) + 6\alpha(z)\alpha'(z) -48 \gamma'(z) + P^\pm_{9,1}(z,U_9^\pm,V_9^\pm)}{(V_9^\pm)^2 \cdot D^\pm(z,U_9^\pm,V_9^\pm)}, \\
V_9^\pm{}' & = \frac{-96-6 U_9 \left( \alpha (z) \alpha '(z)+8 \gamma '(z)-2 \alpha''(z) \right) + P^\pm_{9,2}(z,U_9^\pm,V_9^\pm)}{(U_9^\pm)^2 V_9^\pm \cdot D^\pm(z,U_9^\pm,V_9^\pm)},
\end{aligned}
\end{equation}
where $p_{9,2}$ and $P_{9,i}$, $i=1,2$ are polynomials that are zero on the exceptional curve from the last blow-up, that is, $p^\pm_{9,2}(z,0,c) = 0$ and $P^\pm_{9,i}(z,C,0) = 0$. The zero set of $d^\pm$, resp. $D^\pm$ is the exceptional divisor, i.e.\ the proper transform of the line at infinity $\mathcal{L}_0 = I \setminus \{ \mathcal{P}_1 \} \in \mathbb{P}^2$ and the exceptional curves $\mathcal{L}_1,\dots,\mathcal{L}_8^\pm$ from the blow-ups of the two cascades $\mathcal{P}_1 \leftarrow \cdots \leftarrow \mathcal{P}_8^+$ and  $\mathcal{P}_1 \leftarrow \cdots \leftarrow \mathcal{P}_8^-$, respectively:
\begin{equation*}
\begin{aligned}
d^\pm =& \pm \left( 96-24 (u^\pm_9)^2 \alpha +9 (u^\pm_9)^4 \alpha^2-32 (u^\pm_9)^3 \beta +24 (u^\pm_9)^5 \alpha \beta -48 (u^\pm_9)^4 \gamma -96 u_9^5 \delta \right) \\ & +12 (u^\pm_9)^4 \alpha'+32 (u^\pm_9)^5 \beta' +96 (u^\pm_9)^6 v^\pm_9, \\
D^\pm =& \pm \left( 96-24 (U^\pm_9 V^\pm_9)^2 \alpha+9 (U^\pm_9 V^\pm_9)^4 \alpha^2-32 (U^\pm_9 V^\pm_9)^3 \beta +24 (U^\pm_9 V^\pm_9)^5 \alpha \beta -48 (U^\pm_9 V^\pm_9)^4 \gamma \right. \\ & \left. -96 (U^\pm_9 V^\pm_9)^5 \delta \right) +12 (U^\pm_9 V^\pm_9)^4 \alpha'+32 (U^\pm_9 V^\pm_9)^5 \beta' +96 (U^\pm_9)^5 (V^\pm_9)^6. \\
\end{aligned}
\end{equation*}
Integrating the first equation in (\ref{system9}) yields 
\begin{equation*}
u_9^\pm = i\sqrt{2}(z-z_0)^{1/2} + O\left( (z-z_0) \right),
\end{equation*}
where the sign of the square root can be absorbed into the choice of branch for $(z-z_0)^{\frac{1}{2}}$. Inserting this result into the second equation of (\ref{system9}), we see that $v_9^\pm$ has a logarithmic singularity,
\begin{equation*}
v_9^\pm(z) = \frac{1}{96} \left( \pm 6 \alpha''(z) +3 \alpha(z) \alpha'(z) -24\gamma'(z) \right) \log(z-z_0) + O\left((z-z_0)^{1/2}\right),
\end{equation*}
unless the condition 
\begin{equation*}
\pm 2\alpha''(z) + \alpha(z) \alpha'(z) - 8\gamma(z) =0,
\end{equation*}
is satisfied. This condition, for both signs, amounts to the relations
\begin{equation}
\label{condn=5}
	\alpha''(z) \equiv 0, \quad \left( \alpha(z)^2 - 16\gamma(z) \right)' \equiv 0.
\end{equation}
In this case, a cancellation of one factor of $u_9^\pm$ resp. $V_9^\pm$ occurs in the second and third equation of system (\ref{system9}). Then, by changing the role of dependent and independent variables, the system is of the following form:
\begin{equation}
\label{system9regular}
\begin{aligned}
\frac{d z}{d u_9^\pm} & = -\frac{u_9^\pm \cdot d^\pm(z,u_9^\pm,v_9^\pm)}{96}, \\
\frac{d v_9^\pm}{d u_9^\pm} & = -\frac{\tilde{p}_{9,2}^\pm(z,u_9^\pm,v_9^\pm)}{96}, \\
\frac{d z}{d V^\pm_9} & = \frac{(U_9^\pm)^2 V_9^\pm \cdot D^\pm(z,U_9^\pm,V_9^\pm)}{-96 +P^\pm_{9,2}(z,U_9^\pm,V_9^\pm)}, \\
\frac{d U^\pm_9}{d V^\pm_9} & = \frac{\tilde{P}^\pm_{9,1}(z,U^\pm_9,V^\pm_9)}{-96 + P^\pm_{9,2}(z,U^\pm_9,V^\pm_9)},
\end{aligned}
\end{equation}
where $\tilde{p}^\pm_{9,2} = \frac{1}{u^\pm_9} p^\pm_{9,2}$ and $\tilde{P}^\pm_{9,1} = \frac{1}{V^\pm_9} P_{9,1}$ are polynomials in $u_{9}^\pm,v_{9}^\pm$ and $U^\pm_9,V^\pm_0$, respectively. 
For initial values $(z,u_9^\pm,v_9^\pm)=(z_0,0,h)$, respectively $(z,U^\pm_9,V^\pm_9)=(z_0,H,0)$, on the exceptional curve $\mathcal{L}_9^\pm : \{u_9^\pm = 0\} \cup \{V^\pm_9=0\}$, this defines a regular initial value problem with analytic solutions, e.g.
\begin{equation}
\label{system9solution}
\begin{aligned}
z(u_9^\pm) &= z_0 -\frac{1}{2} (u_9^\pm)^2 + O\left( (u_9^\pm)^3 \right), \\
v_9^\pm(u_9^\pm) &= h + O(u_9^\pm).
\end{aligned}
\end{equation}
Inverting these expansions we find the algebraic series solutions
\begin{equation}
\label{syst9sol}
u_9^\pm(z) = i\sqrt{2} (z-z_0)^{1/2} + O(z-z_0), \quad v_9^\pm(z) = h + O((z-z_0)^{1/2}),
\end{equation}
which by (\ref{coord9inv}) correspond to square-root type algebraic poles in the variable $y$. 

To show that these are the only types of behaviour that can occur, we have to show that any solution actually traverses either the line $\mathcal{L}_9^+$ or $\mathcal{L}_9^-$. This is achieved by considering the following auxiliary function,
\begin{equation*}
W = \frac{1}{2}(y'(z))^2 - \frac{1}{2} y(z)^6 - \frac{\alpha(z)}{4} y(z)^4 - \frac{\beta(z)}{3} y(z)^3 - \frac{\gamma(z)}{2} y(z)^2 - \delta(z) y(z) + \left( \sum_{k=1}^4 \frac{\xi_k(z)}{y(z)^k} \right) y'(z),
\end{equation*}
where we impose the conditions (\ref{condn=5}) and the functions $\xi_k$ can be determined to be
\begin{equation*}
	\xi_1 = \frac{1}{8} \alpha', \quad \xi_2 = \frac{1}{3} \beta', \quad \xi_3 = 0, \quad \xi_4 = \frac{1}{24} \beta \alpha' + \frac{1}{3} \beta'' - \delta'. 
\end{equation*}
Here, $\xi_3$ turns out to be arbitrary and has been set to $0$.

After each blow-up one can check that, away from the base point, the logarithmic derivative $\frac{W'}{W}$ is bounded, whereas $W$ itself is infinite. Although the expressions for the logarithmic derivative become lengthy and are omitted here, this can be checked easily using a computer algebra system. Lemma \ref{log_bounded} then shows that the lines $\mathcal{L}_0$, $\mathcal{L}_i \setminus \mathcal{P}_{i+1}$, $i=1,2,3$ and $\mathcal{L}^\pm_{i} \setminus \mathcal{P}^\pm_{i+1}$, $i \in \{4,5,6,7,8\}$ are inaccessible for the flow of the vector field. 

Denoting by $\mathcal{S}_9(z)$ the space obtained by blowing up $\mathbb{P}^2$ along the two cascades $\mathcal{P}_1 \leftarrow \cdots \leftarrow \mathcal{P}_9^+$ and $\mathcal{P}_1 \leftarrow \cdots \leftarrow \mathcal{P}_9^-$, and $\mathcal{I}'_8(z)$ the proper transform of the set $\mathcal{I}_8(z) = \mathcal{L}_0 \cup \mathcal{L}_1 \cup \mathcal{L}_2 \cup \mathcal{L}_3 \cup \bigcup_{i=4}^8 \mathcal{L}_i^+ \cup \bigcup_{i=4}^8 \mathcal{L}_i^-$ in $\mathcal{S}_9(z)$, we obtain $\mathcal{S}_9(z) \setminus \mathcal{I}'_8(z)$ as the analogue of the space of initial values for equation (\ref{degree5_reseqn}). 

We can now prove, by similar arguments as in Proposition \ref{degree4prop}, that the algebraic series (\ref{syst9sol}) are the only possible types of movable singularities that can occur by analytic continuation of a solution along finite-length curves.
\begin{prop}
The class of equations
\begin{equation}
\label{degree5_reseqn}
y'' = y^5 + (az+b) y^3 + \beta(z) y^2 + \left( \frac{1}{16}(az+b)^2 + c \right) y + \delta(z),
\end{equation}
where $\beta(z)$ and $\delta(z)$ are analytic in $z$ and $a,b,c \in \mathbb{C}$ constants, has the quasi-Painlev\'e property, with square-root type algebraic poles.
\end{prop}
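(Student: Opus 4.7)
The plan is to follow the same template used in the proof of Proposition \ref{degree4prop}, exploiting all the machinery already set up: the sequence of $15$ blow-ups, the regularised system (\ref{system9regular}), the auxiliary function $W$ together with Lemma \ref{log_bounded}, and Painlev\'e's Lemma \ref{Painlemma}. Note first that under the hypotheses $\alpha(z)=az+b$ and $\gamma(z)=\tfrac{1}{16}(az+b)^2+c$, one has $\alpha''(z)\equiv 0$ and $(\alpha(z)^2-16\gamma(z))'\equiv 0$, which are exactly the resonance conditions (\ref{condn=5}). So the cancellation in the second and third equations of (\ref{system9}) occurs, and after swapping the roles of dependent and independent variables we obtain, on each branch $\pm$, the regular initial value problem (\ref{system9regular}) in a neighbourhood of any point of the exceptional curve $\mathcal{L}_9^\pm$.

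Now suppose $y(z)$ is a solution with a movable singularity at $z_\ast$, reached by analytic continuation along a finite-length path $\gamma\subset\mathbb{C}$ ending at $z_\ast$. Lift $\gamma$ to a curve $\Gamma$ in the extended phase space $\bigcup_{z}\mathcal{S}_9(z)$. Pick a sequence $(z_n)\subset\gamma$ with $z_n\to z_\ast$. Since $\mathcal{S}_9(z_\ast)$ is compact, some subsequence $\Gamma(z_{n_k})$ converges to a point $P_\ast\in\mathcal{S}_9(z_\ast)$. The first task is to show that $P_\ast\notin\mathcal{I}'_8(z_\ast)$: this is achieved by evaluating the auxiliary function $W$ (with the $\xi_k$ listed just before the proposition) in each of the coordinate charts produced by the $15$ blow-ups, verifying in each chart that $\frac{W'}{W}$ stays bounded on a neighbourhood of any point of $\mathcal{L}_i\setminus\{\mathcal{P}_{i+1}\}$ or $\mathcal{L}_i^\pm\setminus\{\mathcal{P}_{i+1}^\pm\}$, while $W$ itself blows up there. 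Lemma \ref{log_bounded} then rules out $P_\ast$ lying on any of those inaccessible components, exactly as in Remark \ref{system14remark}.

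The second task is to rule out $P_\ast\in\mathcal{S}_9(z_\ast)\setminus\bigl(\mathcal{I}'_8(z_\ast)\cup\mathcal{L}_9^+\cup\mathcal{L}_9^-\bigr)$: at such a point the original system (in some chart) defines a regular initial value problem, so by Lemma \ref{Painlemma} the solution would extend analytically across $z_\ast$, contradicting the assumption of a singularity. Consequently $P_\ast\in\mathcal{L}_9^+\cup\mathcal{L}_9^-$, say $P_\ast=(u_9^\pm,v_9^\pm)=(0,h)$ for some $h\in\mathbb{C}$ (or the analogous point in the other chart $(U_9^\pm,V_9^\pm)=(H,0)$). At that point the regular system (\ref{system9regular}) provides, through Cauchy's theorem, a unique analytic solution $\bigl(z(u_9^\pm),v_9^\pm(u_9^\pm)\bigr)$ of the form (\ref{system9solution}), with $\frac{dz}{du_9^\pm}\bigr|_{u_9^\pm=0}=0$ and $\frac{d^2z}{d(u_9^\pm)^2}\bigr|_{u_9^\pm=0}\neq 0$. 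This curve must coincide with the lifted solution, since the entire subsequence $\Gamma(z_{n_k})$ converges to $P_\ast$ along $\Gamma$ and uniqueness in (\ref{system9regular}) pins it down. Inverting the Taylor expansion of $z(u_9^\pm)$ at $u_9^\pm=0$ gives a local expansion $u_9^\pm(z)=i\sqrt{2}(z-z_\ast)^{1/2}+O(z-z_\ast)$, and hence $v_9^\pm(z)=h+O((z-z_\ast)^{1/2})$, as in (\ref{syst9sol}). Substituting into the inversion formula (\ref{coord9inv}) produces a convergent Puiseux series in $(z-z_\ast)^{1/2}$ for $y(z)$ with leading term $\pm(z-z_\ast)^{-1/2}/(i\sqrt{2})$, i.e.\ a square-root type algebraic pole. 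This simultaneously shows that $\Gamma$ has finite length, completing the proof.

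The main obstacle is the bookkeeping in the first step: one must genuinely verify, chart by chart through all $15$ blow-ups of both cascades, that $W$ is infinite on the proper transforms of the line at infinity and the exceptional curves $\mathcal{L}_1,\dots,\mathcal{L}_3,\mathcal{L}_4^\pm,\dots,\mathcal{L}_8^\pm$ away from the base points, while $\frac{W'}{W}$ remains bounded there. This is a finite but very lengthy computer-algebra computation; the key structural input that makes it work is the identity for $W'$ of the form $W'=P(z,1/y)W+Q(z,1/y)y'+R(z,1/y)$ analogous to the one used in Section \ref{sec:degree4}, which under the resonance conditions (\ref{condn=5}) has polynomial coefficients in $1/y$, ensuring that the singular part of $W$ dominates $W'$ in the relevant limits. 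Once this verification is in place, the rest of the argument is a direct transcription of the proof of Proposition \ref{degree4prop}.
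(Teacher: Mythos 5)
Your proposal is correct and follows essentially the same route as the paper's own proof: verify that the hypotheses realise the resonance conditions (\ref{condn=5}), use compactness plus the auxiliary function $W$ with Lemma \ref{log_bounded} to force the limit point onto $\mathcal{L}_9^+\cup\mathcal{L}_9^-$, exclude interior points via Lemma \ref{Painlemma}, and then invert the analytic solution of (\ref{system9regular}) to obtain the square-root algebraic pole via (\ref{coord9inv}). The extra details you supply (explicitly checking the resonance conditions and noting the uniqueness/finite-length points) are consistent with, and slightly more explicit than, the paper's argument.
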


\begin{proof}
The proof proceeds similar to the proof of Proposition \ref{degree4prop}. Suppose that a solution, analytically continued along a finite-length curve $\gamma \subset \mathbb{C}$, ends in a movable singularity $z_\ast \in \mathbb{C}$. The lifted curve in the extended phase space is denoted $\Gamma(z)$. Let $(z_n) \subset \gamma$, $z_n \to z_\ast$ be a sequence along $\gamma$. Due to the compactness of the phase space (with all exceptional curves), there exists a subsequence $(z_{n_k})$ such that the lifted sequence $\Gamma(z_{n_k})$ converges to a point $P_\ast \in S_9(z_\ast)$. By Lemma \ref{log_bounded}, and the existence of a function $W(z)$ that is infinite on the set $\mathcal{I}'_8(z)$, with bounded logarithmic derivative $\frac{W'}{W}$, we actually have $P_\ast \in S_9(z_\ast) \setminus \mathcal{I}'_8(z_\ast)$. Since the solution has a singularity at $z_\ast$, we must have $P_\ast \in \mathcal{L}^+_9 \cup \mathcal{L}^-_9$. For, if this was not the case, the sequence of points in the phase space would have an accumulation point away from the exceptional curves, where the original equation has a regular initial value problem, and, by Lemma \ref{Painlemma} has an analytic solution, which is contrary to the assumption of a singularity at $z_\ast$.
Now suppose that $P_\ast \in \mathcal{L}^+_9$ (the case for $P_\ast \in \mathcal{L}^-_9$ is similar). The sequence $(z_{n_k}, u^+_9(z_{n_k}),v^+_9(z_{n_k}))$ converges to the point $P_\ast \in \mathcal{L}^+_9$, on which the system (\ref{system9regular}) defines a regular initial value problem for $(z,v^+_9)$ in the variable $u_9^\pm$, resp. $(z,U^+_9)$ in the variable $V^+_9$. Therefore, by Lemma \ref{Painlemma}, system (\ref{system9regular}) has an analytic solution of the form (\ref{system9solution}), with $z_0 = z_\ast$, which translates into a square-root type branch point for $(u^+_9(z),v^+_9(z))$ and therefore by (\ref{coord9inv}) into a square-root type algebraic pole for $y(z)$.
\end{proof}

\section{Hamiltonian systems with algebraic singularities}
\label{sec:HamiltonianSystems}
In the previous section we have seen how to resolve the base points of the second-order equations of the form $y'' = P(z,y)$, extending the phase space of $(y,y')$. These equations are themselves Hamiltonian systems by letting 
\begin{equation}
\label{2nd-order-Hamil}
H(z,x,y) = \frac{1}{2} x^2 - \tilde{P}(z,y), \quad \frac{\partial \tilde{P}}{\partial y} = P(z,y),
\end{equation}
where $x=y'$ and we let $N = \deg_y P$. In the previous sections we considered the cases $N=4$ and $N=5$, whereas the case $N=3$ was discussed in section \ref{sec:InitialValueSpace}, leading to the second Painlev\'e equation. Furthermore, the case $N=2$ leads to the first Painleve equation. In fact, all six Painlev\'e equations can be written as polynomial Hamiltonian systems $H(z,x,y)$ with rational coefficients in $z$. The blow-ups leading to the space of initial values for all Painlev\'e Hamiltonian systems where performed by Okamoto \cite{Okamoto1979}.

In \cite{Kecker2016}, one of the authors studied the class of polynomial Hamiltonian systems,
\begin{equation}
	\label{generalMN_Hamiltonian}
\begin{aligned}
H(z,x(z),y(z)) &= \sum_{i=0}^M \sum_{j=0}^N \alpha_{ij}(z) x(z)^i y(z)^j, \\
x'(z) = \frac{\partial H}{\partial y}, & \quad y'(z) = - \frac{\partial H}{\partial x}, 
\end{aligned}
\end{equation}
with $i,j$ constrained by $iN+jM \leq MN$, so that $x^M$ and $y^N$ are the dominant terms in the equations which, similar to the equations $y'' = P(z,y)$, under certain resonance conditions, have the property that all their movable singularities are algebraic poles. We consider here the case where the coefficients of the dominant terms are constant, which amounts to saying the system (\ref{generalMN_Hamiltonian}) has no fixed singularities. By a suitable scaling, these constants can take any (non-zero) numerical value. Furthermore, the terms $x^{M-1}$ and $y^{N-1}$ can be transformed away, leaving us with the Hamiltonian
\begin{equation}
\label{MNHamiltonian}
H = \frac{1}{N} y^N - \frac{1}{M} x^M + \sum_{0 < iM+jN<MN} \alpha_{ij}(z) x^i y^j.
\end{equation}
Under these assumptions, leading order behaviour for series solutions $(x(z),y(z)$ is of the form
\begin{equation}
\label{HamExpansions}
x(z) = c_0 (z-z_0)^{-\frac{N}{MN-M-N}} + \cdots , \quad y(z) = d_0 (z-z_0)^{-\frac{M}{MN-M-N}} + \cdots .
\end{equation}
Using the method of compactifying the phase space and blowing up the base points, we will see how to obtain the conditions by which the expansions (\ref{HamExpansions}) yield algebraic poles, i.e.\ when they are free from logarithmic singularities.
The case $\min\{M,N\} = 2$ can be reduced essentially to the case (\ref{2nd-order-Hamil}), representing second-order equations. We will thus look at some examples with $M,N \geq 3$. The case $M=N=3$, discussed in the next paragraph, is interesting as it leads to a system of equations related to the fourth Painlev\'e equation, i.e. in this case the singularities are simple (ordinary) poles. The space of initial values for this system was already computed in \cite{Kecker2019} and is reproduced here for completeness. In the following two sections we then consider the cases $M=N=4$ and $M=3,N=4$, which have square-root and $5$th-root type algebraic poles, respectively. Constructing the analogue of the space of initial values for these systems and using an appropriate auxiliary function in conjunction with Lemma \ref{log_bounded} to show that the exceptional curves from the intermediate blow-ups are inaccessible for the flow of the vector field, allows us to conclude that these are the only possible types of movable singularities under analytic continuation along finite-length curves, i.e.\ these systems have the quasi-Painlev\'e property. The forms of the auxiliary functions are taken from the article \cite{Kecker2016}, where they are derived as a quantity that is bounded at all movable singularities.

\subsection{Case $M=N=3$: a system with the Painlev\'e property. \\}
\label{sec:cubicHam}
We consider the cubic Hamiltonian system
\begin{equation}
\label{cubicHam}
H(z,x(z),y(z)) = \frac{1}{3} \left( y^3 - x^3 \right) + \gamma(z)xy +\beta(z) x + \alpha(z) y,
\end{equation}
which was introduced in \cite{Kecker2015}. If $\alpha$ and $\beta$ are constants and $\gamma(z)$ a function at most linear in $z$, it was shown that the system of equations derived from (\ref{cubicHam}) has the Painlev\'e property. Below we will see that, by applying the procedure of compactifying the system (\ref{cubicHam}) with general analytic functions $\alpha(z)$, $\beta(z)$, $\gamma(z)$, after blowing up and resolving the base points of the system, these conditions can be recovered by requiring that the system has no logarithmic singularities.

Extending the system to projective space we obtain, in the three standard coordinate charts of $\mathbb{P}^2$, $[1:y:x] = [u:v:1] = [V:1:U]$,
\begin{equation*}
\begin{aligned}
x'(x) &= y^2 + \gamma(z) x + \alpha(z), & \quad
y'(z) &= x^2 - \gamma(z) y - \beta(z), \\
u'(z) &=-v^2-u^2 \alpha (z)-u \gamma (z), & \quad
v'(z) &=-\frac{-1+v^3+u^2 v \alpha (z)+u^2 \beta (z)+2 u v \gamma(z)}{u},\\
V'(z) &=-U^2+V^2 \beta (z)+V \gamma(z), & \quad U'(z) &=-\frac{-1+U^3-V^2 \alpha (z)-U V^2 \beta (z)-2 U V \gamma(z)}{V}.
\end{aligned}
\end{equation*}
We can see that initially there are three base points on the line at infinity of $\mathbb{P}^2$, given by
\begin{equation*}
\mathcal{P}_1^\rho: (u,v) = (0,\rho) \quad \leftrightarrow \quad (U,V) = (\rho^{-1},0), \quad \rho \in \{1,\omega,\bar{\omega}\},
\end{equation*}
where $\omega = \frac{-1 + i \sqrt{3}}{2}$ is a third root of unity and $\bar{\omega}$ its complex conjugate. Keeping $\rho$ as a symbol representing either of the three roots of unity, each base point is resolved by a cascade of three blow-ups. We denote the coordinates of the three respective sequences of blow-ups with superscripts $\rho \in  \{1,\omega,\bar{\omega}\}$:
\begin{equation*}
\begin{aligned}
{} & \mathcal{P}_1^\rho: (u^\rho,v^\rho) = \left(\frac{1}{x},\frac{y}{x}\right) = (0,\rho) \quad \leftarrow \quad \mathcal{P}_2^\rho: (u_1^\rho,v_1^\rho) = \left( \frac{1}{x}, y - \rho x \right) = (0,-\bar{\rho} \gamma(z)) \\
& \leftarrow \quad \mathcal{P}_3^\rho: (u_2^\rho,v_2^\rho) = \left( \frac{1}{x}, x \left( y - \rho x + \bar{\rho} \gamma(z) \right) \right) = \left( 0, \gamma'(z) - \rho \beta(z) - \bar{\rho} \alpha(z) \right).
\end{aligned}
\end{equation*}
After blowing up $\mathcal{P}_3^\rho$, the system of equations takes the following form:
\begin{equation}
\label{cubicsystem3}
\begin{aligned}
u_3^\rho{}' &= p^{\rho}_{3,1}(z,u_3^\rho,v_3^\rho), \\
v_3^\rho{}' &= \frac{\bar{\rho} \alpha'(z) + \rho \beta'(z) - \gamma''(z) + p_{3,2}^\rho(z,u_3^\rho,v_3^\rho)}{u_3^\rho}, \\
U_3^\rho{}' &= \frac{U_3^\rho (\gamma''(z) - \bar{\rho} \alpha'(z) - \rho \beta'(z)) + P_{3,1}(z,U_3^\rho,V_3^\rho)}{V_3^\rho},\\
V_3^\rho{}' &= \frac{-\bar{\rho} + P_{3,2}(z,U_3^\rho,V_3^\rho)}{U_3^\rho}.
\end{aligned}
\end{equation}
We see that there is an additional base point at $\mathcal{P}^\rho_4: U_3^\rho = V_3^\rho = 0$. This point can be blown up once more, rendering a system free from base points (but not regular). However, the point $\mathcal{P}_4^\rho$ is only present if the condition 
\begin{equation}
\label{cubicHam_cond}
\bar{\rho} \alpha'(z) + \rho \beta'(z) - \gamma''(z) \equiv 0,
\end{equation}
is {\it not} satisfied, in which case the system (\ref{cubicsystem3}) exhibits solutions with logarithmic singularities. If, however, condition (\ref{cubicHam_cond}) is satisfied, factors of $u_3^\rho$ and $V_3^\rho$ cancel in the second resp. third equation of system (\ref{cubicsystem3}), which then defines a regular initial value problem at every point on the exceptional curves $\mathcal{L}_3^\rho: (u_3^\rho,v_3^\rho) = (0,h)$ of the third blow-up in each cascade. Denoting by $\mathcal{S}_3(z)$ the space obtained by blowing up $\mathbb{P}^2$ along the three cascades of base points $\mathcal{P}_1^\rho \leftarrow \mathcal{P}_2^\rho \leftarrow \mathcal{P}_3^\rho$, $\rho \in \{1,\omega,\bar{\omega}\}$, the space of initial values is $\mathcal{S}_3(z) \setminus \left( \bigcup_{\rho \in \{1,\omega,\bar{\omega}\}} \mathcal{I}^{\rho}_2{}'(z) \right)$, where $\mathcal{I}^\rho_2{}'(z)$ is the union of the proper transforms of the line at infinity $\mathcal{L}_0 = I \setminus \{ \mathcal{P}^1_1, \mathcal{P}^\omega_1, \mathcal{P}^{\bar{\omega}}_1 \} \subset \mathbb{P}^2$ and the exceptional curves $\mathcal{L}_i^\rho$, $i=1,2$, from the first two blow-ups in each cascade of base points. 

Together, the three conditions (\ref{cubicHam_cond}), for $\rho \in \{1,\omega,\bar{\omega}\}$, are required for the absence of logarithms in the solutions, which result in $\gamma'' = \beta' = \alpha' \equiv 0$, that is, $\alpha$ and $\beta$ are constant and $\gamma(z) = az+b$ is at most linear in $z$. In case $a=0$, the Hamiltonian system is autonomous and can be integrated directly using the Hamiltonian as first integral. When $a \neq 0$, by a re-scaling of $z$, $x$ and $y$ the system can be normalised to the form
\begin{equation}
\label{cubicPainlevesystem}
H = \frac{1}{3}(y^3 - x^3) + zxy + \alpha y + \beta x, \quad x' = y^2 + zx + \alpha, \quad y' = x^2 - zy - \beta.
\end{equation}
This system is in fact closely related to the Hamiltonian system defining the fourth Painlev\'e equation, and was introduced in \cite{Kecker2015} and investigated further in \cite{Steinmetz2018}. By similar arguments as in section \ref{sec:InitialValueSpace}, constructing the space of initial values gives an alternative method of proof for the Painlev\'e property of system (\ref{cubicPainlevesystem}). The proof makes use of the following auxiliary function,
\begin{equation}
	\label{MN3_W}
	W = H - \frac{y^2}{x}.
\end{equation}
The correction term $\frac{y^2}{x}$ is chosen to compensate for the divergence of $H' = \frac{\partial H}{\partial z} = x y$ at any singularity (one could alternatively have chosen $\frac{x^2}{y}$ due to the symmetry in $x$ and $y$). In \cite{Kecker2015} it is shown that $W$ satisfies a first-order differential equation of the form $W' = P W + Q$, where $P$ and $Q$ are bounded functions, and hence that $W$ itself is bounded. In the context of the space of initial values, we need to check that the logarithmic derivative $\frac{W'}{W}$ is bounded on the exceptional curve, whereas $W$ itself is infinite there, to apply Lemma \ref{log_bounded}. We demonstrate the process for this example, re-writing the function $W$ in terms of the other coordinate charts of $\mathbb{P}^2$.

In the chart $[u:v:1]$, we have 
\begin{equation}
	W_0(z,u(z),v(z)) = (v^3-1-3 u^2 v^2+3 u v z+3 u^2 v \alpha +3 u^2 \beta)/(3 u^3).
\end{equation}
The logarithmic derivative is 
\begin{equation}
	\frac{d \log W_0}{d z} = \frac{W_0'}{W_0} = \frac{3 u \left(-v+v^4+3 u v^2 z+u^2 v^2 \alpha +2 u^2 v \beta \right)}{v^3-1-3 u^2 v^2+3 u v z+3 u^2 v \alpha +3 u^2 \beta },
\end{equation}
which is bounded in a neighbourhood of any point on the line $u=0$, apart from the base points where $v^3=1$. Therefore, by Lemma \ref{log_bounded}, the line at infinity is inaccessible for the solution away from the points $(u,v)=(0,\rho)$, $\rho \in  \{1,\omega,\bar{\omega}\}$. After the first blow-up, in the coordinates $(u_1,v_1) = (u^\rho_1,v^\rho_1)$, where for simplicity we let $\rho =1$, we have
\begin{equation*}
	W_1(z,u_1,v_1) = \frac{-1+3 \beta  u_1^2+ (3 z u_1 + 3 \alpha  u_1^2) \left(1+u_1 v_1\right) -3 u_1^2 \left(1+u_1 v_1\right)^2+\left(1+u_1 v_1\right)^3}{3u_1^3},
\end{equation*}
and
\begin{equation*}
	\frac{W_1'}{W_1} = \frac{3 u_1  P_1(u_1,v_1,z)}{3 v_1+3 z +u_1 Q_1(u_1,v_1,z)},
\end{equation*}
where
\begin{equation*}
	\begin{aligned}
	P_1 &= 3 z+\alpha  u_1+2 \beta  u_1+3 v_1+6 z u_1 v_1+2 \alpha  u_1^2 v_1+2 \beta  u_1^2 v_1+6 u_1 v_1^2+3 z u_1^2 v_1^2+\alpha  u_1^3 v_1^2+4 u_1^2 v_1^3+u_1^3 v_1^4, \\
	Q_1 &= -3 +3 \alpha +3 \beta +3 z v_1-6 u_1 v_1+3 \alpha  u_1 v_1+3 v_1^2-3 u_1^2 v_1^2+u_1 v_1^3,
	\end{aligned}
\end{equation*}
the calculations for $\rho = \omega,\bar{\omega}$ being similar. $\frac{W_1'}{W_1}$ is bounded in a neighbourhood of any point on the exceptional line $u_1=0$, other than the base point $(u_1,v_1)=(0,-z)$. Supposing that we are analytically continuing a solution leading up to a singularity at $z_\ast$, the point $(u_1,v_1)=(0,-z_\ast)$ is the only point on the exceptional curve where Lemma \ref{log_bounded} cannot be applied. 
Performing the second blow-up we have, in the coordinates $(u_2,v_2)$, 
\begin{equation*}
	W_2(z,u_2,v_2) = \frac{-1-3 u^2 (1+u (u v-z))^2+(1+u (u v-z))^3+ (3 u +3 u^2 \alpha) (1+u (u v-z)) z +3 u^2 \beta }{3 u^3},
\end{equation*}
and
\begin{equation*}
	\frac{W_2'}{W_2} = \frac{3 u P_2(u,v,z)}{3 v - 3 + 3 \alpha + 3 \beta + u Q_2(u,v,z)},
\end{equation*}
where
\begin{equation*}
	\begin{aligned}
	P_2 & = \left(3 v+6 u^2 v^2+4 u^4 v^3+u^6 v^4-6 u v z-9 u^3 v^2 z-4 u^5 v^3 z+6 u^2 v z^2+6 u^4 v^2 z^2-u z^3 \right. \\ & \left. -4 u^3 v z^3+u^2 z^4+\alpha +2 u^2 v \alpha +u^4 v^2 \alpha -2 u z \alpha -2 u^3 v z \alpha +u^2 z^2 \alpha +2 \beta +2 u^2 v \beta -2 u z \beta \right), \\
	Q_2 & =  -6 u v+3 u v^2-3 u^3 v^2+u^3 v^3+6 z-3 v z+6 u^2 v z-3 u^2 v^2 z-3 u z^2+3 u v z^2- z^3 +3 u v \alpha -3 z \alpha,
	\end{aligned}
\end{equation*}
so that $\frac{W_2'}{W_2}$ is bounded in a neighbourhood of any point on the exceptional curve $u_2=0$, other than $(u_2,v_2) = (0, 1-\alpha -\beta)$, whereas $W_2$ itself is infinite on this line. By Lemma \ref{log_bounded} we can conclude that, if a solution approaches a singularity $z_\ast$ along some finite-length path $\gamma$ ending in $z_\ast$, it must pass through one of the exceptional lines $\mathcal{L}^\rho_3$ introduced by the third blow-ups in each cascade. On the lines $\mathcal{L}^\rho_3$ the function $W$, re-written in the appropriate coordinates, is in fact finite. Furthermore, any solution approaching the lines $\mathcal{L}^{\rho}_3$ can be analytically continued across these lines where the system defines a regular initial value problem. The solutions on the lines $\mathcal{L}^{\rho}_3$, when transformed back into the original coordinates, result in simple poles for $x(z),y(z)$ with residues $-\rho$ and $\bar{\rho}$, respectively. This provides an alternative proof that the Hamiltonian system defined by (\ref{cubicPainlevesystem}) has the Painlev\'e property.

\subsection{Case $M=N=4$.}
The differential system studied in this section arises from the Hamiltonian
\begin{equation*}
	H(z,x,y) = \frac{1}{4} \left(y^4 - x^4 \right) + \sum_{0<i+j\leq 3} \alpha_{i,j}(z) x^i y^j,
\end{equation*}
where the $\alpha_{i,j}(z)$ are analytic functions in some common domain $\Omega \subset \mathbb{C}$. Similar as in the case $M=N=3$, the geometry of the analogue of the space of initial values of this system is much more symmetric than in the case of the second-order equations discussed in sections \ref{sec:degree4} and \ref{sec:degree5}. Although $16$ blow-ups are required to regularise the system, these decompose into $4$ separate cascades of $4$ blow-ups. As before, we write down the extended system of equations in the three standard charts of $\mathbb{P}^2$:
\begin{equation*}
	\begin{aligned}
		x' &= y^3+\alpha _{0,1}+2 y \alpha _{0,2}+x \alpha _{1,1}+2 x y \alpha _{1,2}+x^2 \alpha_{2,1}, \\
		y' &= x^3-\alpha _{1,0}-y \alpha _{1,1}-y^2 \alpha _{1,2}-2 x \alpha _{2,0}-2 x y \alpha_{2,1}, \\
		u' &= -\frac{v^3+u^3 \alpha _{0,1}+2 u^2 v \alpha _{0,2}+u^2 \alpha _{1,1}+2 u v \alpha _{1,2}+u \alpha_{2,1}}{u}, \\
		v' &= -\frac{-1+v^4+u^3 v \alpha _{0,1}+2 u^2 v^2 \alpha _{0,2}+u^3 \alpha _{1,0}+2 u^2 v \alpha _{1,1}+3 u v^2 \alpha _{1,2}+2 u^2 \alpha _{2,0}+3 u v \alpha _{2,1}}{u^2}, \\
		U' &= -\frac{-1+U^4-V^3 \alpha _{0,1}-2 V^2 \alpha _{0,2}-U V^3 \alpha _{1,0}-2 U V^2 \alpha _{1,1}-3 U V \alpha _{1,2}-2 U^2 V^2 \alpha _{2,0}-3 U^2 V \alpha _{2,1}}{V^2}, \\
		V' &= -\frac{U^3-V^3 \alpha_{1,0}-V^2 \alpha _{1,1}-V \alpha _{1,2}-2 U V^2 \alpha _{2,0}-2 U V \alpha _{2,1}}{V}.
	\end{aligned}
\end{equation*}
We observe that there are six base points (denoted with superscripts), namely
\begin{equation*}
	\begin{aligned}
		\mathcal{P}^0_1 &: (u,v) = (0,0) \\
		\tilde{\mathcal{P}}^0_1 &: (U,V) = (0,0) \\
		\mathcal{P}_1^1 &: (u,v) = (0,1) & \leftrightarrow & \quad (U,V) = (1,0) \\
		\mathcal{P}_1^i &: (u,v) = (0,i) & \leftrightarrow & \quad (U,V) = (-i,0) \\
		\mathcal{P}_1^{-1} &: (u,v) = (0,-1) & \leftrightarrow & \quad (U,V) = (-1,0) \\
		\mathcal{P}_1^{-i} &: (u,v) = (0,-i) & \leftrightarrow & \quad (U,V) = (i,0). \\
	\end{aligned}
\end{equation*}
We note that the points $\mathcal{P}^0_1$ and $\tilde{\mathcal{P}}^1_0$ can be resolved by one blow-up each, only the transforms of the points $\mathcal{P}_1^\rho$, $\rho \in \{1,i,-1,-i\}$ are still visible in the charts obtained after blowing up $\mathcal{P}^0_1$ and $\tilde{\mathcal{P}}^0_1$.

We now resolve the four base points $\mathcal{P}_1^\rho$, $\rho \in \{1,i,-1,-i\}$, in the coordinates $(u,v)$, where we use the superscript $\rho$ to denote the coordinates after blowing up. For each point $\mathcal{P}_1^\rho$, we find the following cascade of four blow-ups:
\begin{equation*}
	\begin{aligned}
		{} & \mathcal{P}_1^\rho: (u,v) = \left(\frac{1}{x},\frac{y}{x}\right) = (0,\rho) \quad \leftarrow \quad \mathcal{P}_2^\rho: (u_1^\rho,v_1^\rho) = \left( \frac{1}{x}, y - \rho x \right) = (0,\alpha_{2,1} + \rho \alpha_{1,2}) \\
		& \leftarrow \quad \mathcal{P}_3^\rho: (u_2^\rho,v_2^\rho) = \left( \frac{1}{x}, x \left(y - \rho x + \bar{\rho} \alpha_{1,2} + \rho^2 \alpha_{2,1} \right) \right) = \left( 0, \frac{\rho}{2} \alpha_{1,2}^2 - \frac{\bar{\rho}}{2} \alpha_{2,1} -\rho^2 \alpha_{1,1} - \bar{\rho} \alpha_{0,2} - \rho \alpha_{2,0} \right) \\
		& \leftarrow \quad \mathcal{P}_4^\rho: (u_3^\rho,v_3^\rho) = \left( \frac{1}{x}, x \left(- \rho x^2 + x y + \bar{\rho} \alpha _{0,2} + \rho^2 \alpha_{1,1} + \bar{\rho} x \alpha _{1,2} - \frac{\rho}{2} \alpha_{1,2}^2 + \rho \alpha_{2,0} + \rho^2 \alpha _{2,1} + \frac{\bar{\rho}}{2} \alpha _{2,1}^2\right) \right) \\
		& \qquad \qquad = \left(0, -\frac{i}{2} \left(2 i \alpha _{0,1}+2 \alpha _{1,0}-2 \alpha _{0,2} \alpha _{1,2}+\alpha _{1,2}^3-2
		\alpha _{1,2} \alpha _{2,0}-2 \alpha _{1,1} \alpha _{2,1} -4 i \alpha _{1,2}^2 \alpha_{2,1}  +4 i \alpha _{2,0} \alpha_{2,1} \right. \right. \\ & \qquad \qquad \qquad \left. \left.  -3 \alpha _{1,2} \alpha _{2,1}^2+2 i \alpha _{1,2}'+2 \alpha_{2,1}'\right) \right).
	\end{aligned}
\end{equation*}

After blowing up $\mathcal{P}_4^\rho$, the system of equations takes the following form:
\begin{equation}
	\label{system4}
	\begin{aligned}
		u_4^\rho{}' & = \frac{- \bar{\rho} + p_{4,1}(z,u_4^\rho,v_4^\rho)}{u_4^\rho}, \\
		v_4^\rho{}' & = \frac{\rho^2 \alpha_{1,1}'(z) + \rho \left( \alpha_{2,0}'(z) - \alpha _{1,2}(z) \alpha _{1,2}'(z) \right) + \bar{\rho} \left( \alpha _{0,2}'(z) + \alpha _{2,1}(z) \alpha _{2,1}'(z) \right) + p_{4,2}(z,u_4^\rho,v_4^\rho) }{\left( u_4^\rho \right)^2}, \\
		U_4^\rho{}' &= \frac{-\rho^2 \alpha_{1,1}'(z) - \rho \left( \alpha_{2,0}'(z) - \alpha _{1,2}(z) \alpha _{1,2}'(z) \right) - \bar{\rho} \left( \alpha _{0,2}'(z) + \alpha _{2,1}(z) \alpha _{2,1}'(z) \right) + P_{4,1}(z,U_4^\rho,V_4^\rho)}{(V_4^\rho)^2}, \\
		V_4^\rho{}' &= \frac{- \bar{\rho} + P_{4,2}(z,U_4^\rho,V_4^\rho)}{(U_4^\rho)^2 V_4^\rho}.
	\end{aligned}
\end{equation}
Thus, unless the condition
\begin{equation}
	\label{mn4cond}
	\rho^2 \alpha_{1,1}'(z) + \rho \left( \alpha_{2,0}'(z) - \alpha _{1,2}(z) \alpha _{1,2}'(z) \right) + \bar{\rho} \left( \alpha _{0,2}'(z) + \alpha _{2,1}(z) \alpha _{2,1}'(z) \right) = 0
\end{equation}
is satisfied, the system admits logarithmic singularities,
\begin{equation*}
	\begin{aligned}
		u_4^\rho =& (-2\bar{\rho})^{1/2} (z-z_0)^{1/2} + O(z-z_0), \\
		v_4^\rho =& \left( \rho^2 \alpha_{1,1}'(z_0) + \rho \left( \alpha_{2,0}'(z_0) - \alpha _{1,2}(z_0) \alpha _{1,2}'(z_0) \right) + \bar{\rho} \left( \alpha _{0,2}'(z_0) + \alpha _{2,1}(z_0) \alpha _{2,1}'(z_0) \right) \right) \log(z-z_0) \\ & + O((z-z_0)^{1/2}).
	\end{aligned}
\end{equation*}
For the solutions of the system to be free from logarithmic branch points, condition (\ref{mn4cond}) must be satisfied for all $\rho \in \{1,i,-1,-i\}$. Then, one factor of $u_4^\rho$ and $V_4^\rho$ cancel in the second resp.\ third equation of system (\ref{system4}).

To see that the singularities $z_\ast$ of the solution are all of this form, we write the system in the form
\begin{equation*}
	\begin{aligned}
		\frac{d z}{d u_4^\rho} &= \frac{u_4^\rho}{- \bar{\rho} + p_{4,1}(z,u_4^\rho,v_4^\rho)}, \\
		\frac{d v_4^\rho}{d u_4^\rho} &= \frac{p_{4,2}(z,u_4^\rho,v_4^\rho)}{- \bar{\rho} + p_{4,1}(z,u_4^\rho,v_4^\rho)},
	\end{aligned}
\end{equation*}
where we have interchanged the role of the dependent and independent variables. This system has analytic solutions for initial values $(z,u_4^\rho,v_4^\rho) = (z_\ast,0,h)$ on the exceptional curve $\mathcal{L}_4^\rho$,
\begin{equation*}
	z = z_\ast - \rho (u_4^\rho)^2 + O((u_4^\rho)^3), \quad v_4^\rho = h + O(u_4^\rho),
\end{equation*}
which can be inverted to find square-root type algebraic series expansions for $(u_4^\rho,v_4^\rho)$:
\begin{equation*}
	u_4^\rho = (z-z_\ast)^{1/2} + O(z-z_\ast), \quad v_4^\rho = h + (z-z_\ast)^{1/2} + O(z-z_\ast).
\end{equation*}
The conditions (\ref{mn4cond}), for $\rho \in \{1,i,-1,-i\}$, decouple into three linearly independent conditions among the $\alpha_{i,j}(z)$ and their derivatives, namely
\begin{equation*}
	(2 \alpha_{2,0}(z) - \alpha_{1,2}(z)^2)' = \alpha_{1,1}'(z) = (2 \alpha_{0,2}(z) + \alpha_{2,1}(z)^2)' \equiv 0,
\end{equation*}
that is, the functions $2 \alpha_{2,0}(z) - \alpha_{1,2}(z)^2$, $\alpha_{1,1}(z)$ and $2 \alpha_{0,2}(z) + \alpha_{2,1}(z)^2$ each have to be equal to a constant. This is in agreement with the resonance conditions found in \cite{Kecker2016} for this Hamiltonian system. Furthermore, we introduce the following auxiliary function,
\begin{equation}
	W = H - \alpha_{2,1}'(z) \frac{y^2}{x} - \alpha_{1,2}'(z) \frac{y^3}{x^2}.
\end{equation}
Using computer algebra, after each blow-up one can routinely check that the logarithmic derivative of $W$ is bounded in a neighbourhood of any point on the exceptional curves away from the base points, while $W$ is infinite. Lemma \ref{log_bounded} then guarantees that the exceptional curves introduced by the first three blow-ups of each cascade are inaccessible for the flow of the vector field. Let $\mathcal{S}_4(z)$ denote the space obtained by blowing up $\mathbb{P}^2$ along the four cascades of base points, $\mathcal{P}_1^\rho \leftarrow \mathcal{P}_2^\rho \leftarrow \mathcal{P}_3^\rho \leftarrow \mathcal{P}_4^\rho$, $\rho \in \{1,i,-1,-i\}$, and $\mathcal{I}_3(z) = \mathcal{L}_0 \cup \bigcup_{i=1}^3 \mathcal{L}_i^1 \cup \bigcup_{i=1}^3 \mathcal{L}_i^i \cup \bigcup_{i=1}^3 \mathcal{L}_i^{-1} \cup \bigcup_{i=1}^3 \mathcal{L}_i^{-i}$. The space of initial values for the Hamiltonian system (\ref{MN4_resHam}) is $\mathcal{S}_4(z) \setminus \mathcal{I}_3'(z)$, at each point of which the system either defines an analytic solution or a solution with square-root type algebraic branch point. Using similar arguments as in the previous sections we can thus show:

\begin{prop}
	\label{MN4_prop}
	Given the Hamiltonian
	\begin{equation}
		\label{MN4_resHam}
		H = \frac{1}{4}\left( y^4 - x^4 \right) + \alpha_{2,1} x^2 y + \alpha_{1,2} x y^2 + (a + \frac{1}{2} \alpha_{2,1}^2) x^2 + (b - \frac{1}{2} \alpha_{1,2}^2)y^2 + c x y + \alpha_{1,0} x + \alpha_{0,1} y,
	\end{equation}
	where $\alpha_{1,2}(z),\alpha_{2,1}(z),\alpha_{1,0}(z),\alpha_{0,1}(z)$ are analytic functions and $a,b,c \in \mathbb{C}$ are constants, the system derived from this Hamiltonian has the quasi-Painlev\'e property, with square-root type algebraic poles.
\end{prop}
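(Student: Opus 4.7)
The plan is to mirror closely the arguments used for Propositions \ref{degree4prop} and the analogous statement in Section \ref{sec:degree5}, taking advantage of the fact that all of the geometric and computational groundwork has already been laid out in the preceding discussion. First, I would interpret the constants $a,b,c$ in (\ref{MN4_resHam}) as the parametrisation of the resonance conditions $(2\alpha_{2,0}-\alpha_{1,2}^2)' \equiv 0$, $\alpha_{1,1}' \equiv 0$, $(2\alpha_{0,2}+\alpha_{2,1}^2)' \equiv 0$ found earlier, noting that the Hamiltonian (\ref{MN4_resHam}) is the most general one compatible with them. Under these conditions, the four cascades $\mathcal{P}_1^\rho \leftarrow \cdots \leftarrow \mathcal{P}_4^\rho$, $\rho\in\{1,i,-1,-i\}$, regularise the system after the cancellation of a factor of $u_4^\rho$ (resp.\ $V_4^\rho$) in the second (resp.\ third) equation of (\ref{system4}), so that, after swapping the role of dependent and independent variables, the system defines a regular initial value problem at every point of each exceptional curve $\mathcal{L}_4^\rho$.

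Next, I would follow the compactness/limit-point argument from the previous proofs. Fix a movable singularity $z_\ast$ reached by analytic continuation along a finite-length curve $\gamma\subset\mathbb{C}$, and let $\Gamma(z)$ denote the lifted curve in $\bigcup_{z\in\Omega}\mathcal{S}_4(z)$. For any sequence $(z_n)\subset\gamma$ with $z_n\to z_\ast$, compactness of $\mathcal{S}_4(z_\ast)$ yields a subsequence $(z_{n_k})$ such that $\Gamma(z_{n_k})\to P_\ast \in \mathcal{S}_4(z_\ast)$. Using the auxiliary function
\begin{equation*}
W = H - \alpha_{2,1}'(z)\frac{y^2}{x} - \alpha_{1,2}'(z)\frac{y^3}{x^2},
\end{equation*}
together with Lemma \ref{log_bounded}, I would exclude $P_\ast \in \mathcal{I}'_3(z_\ast)$: this requires verifying that in each chart obtained from the $3\cdot 4 = 12$ intermediate blow-ups, $W$ is infinite on the proper transform of the exceptional divisor away from the base points, while $\tfrac{W'}{W}$ stays bounded there. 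By the same Painlevé-type argument (Lemma \ref{Painlemma}), $P_\ast$ cannot lie in the finite affine part of $\mathcal{S}_4(z_\ast)$ either, so necessarily $P_\ast \in \mathcal{L}_4^\rho$ for some $\rho\in\{1,i,-1,-i\}$.

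Finally, I would conclude as in Proposition \ref{degree4prop}: on $\mathcal{L}_4^\rho$ the rewritten system
\begin{equation*}
\frac{dz}{du_4^\rho} = \frac{u_4^\rho}{-\bar\rho + p_{4,1}(z,u_4^\rho,v_4^\rho)}, \qquad
\frac{dv_4^\rho}{du_4^\rho} = \frac{p_{4,2}(z,u_4^\rho,v_4^\rho)}{-\bar\rho + p_{4,1}(z,u_4^\rho,v_4^\rho)},
\end{equation*}
with initial data $(z,u_4^\rho,v_4^\rho)=(z_\ast,0,h)$, has an analytic solution $z(u_4^\rho) = z_\ast - \rho(u_4^\rho)^2 + O((u_4^\rho)^3)$ and $v_4^\rho(u_4^\rho) = h + O(u_4^\rho)$; inverting these power series gives a square-root type algebraic expansion in $(z-z_\ast)^{1/2}$ for $(u_4^\rho,v_4^\rho)$, which by the bi-rational coordinate relations translates back to square-root algebraic poles for the original variables $x(z),y(z)$. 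This also rules out $\Gamma$ having infinite length near $z_\ast$, since $\Gamma$ is the analytic image of $\gamma$ in the regularising chart.

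The main obstacle is purely computational: checking that the single auxiliary function $W$ blows up on every one of the twelve intermediate exceptional curves while its logarithmic derivative remains finite there, in each of the four cascades simultaneously. Because the local form of $W$ in blown-up coordinates is unwieldy, this step is not carried out by hand but delegated to computer algebra, exactly as in Remark \ref{system14remark}; the symmetry between the four cascades, linked by the action of the fourth roots of unity on $(x,y)$, reduces the work essentially to one of them. Once this verification is in place, the remainder of the argument is structurally identical to the proof of Proposition \ref{degree4prop}.
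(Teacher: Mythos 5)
Your proposal is correct and follows essentially the same route as the paper, which for this proposition simply invokes ``similar arguments as in the previous sections'': the resonance conditions encoded by the constants $a,b,c$, the cancellation in (\ref{system4}), the swap of dependent and independent variables on $\mathcal{L}_4^\rho$, exclusion of $\mathcal{I}_3'(z_\ast)$ via the auxiliary function $W$ and Lemma \ref{log_bounded}, and the compactness/limit-point argument concluding with inversion of the analytic solution into a square-root expansion. The only addition beyond the paper is your observation that the fourth-roots-of-unity symmetry reduces the computer-algebra verification to a single cascade, which is a sensible economy but not a different method.
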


\subsection{Case $M=3,N=4$.}
With a slightly different normalisation as given in (\ref{MNHamiltonian}) we consider the class of Hamiltonians
\begin{equation}
\label{M3N4Hamiltonian}
H(z,x(z),y(z)) = y^4 - x^3 + \sum_{0<i+j\leq 3} \alpha_{ij}(z) x(z)^i y(z)^j.
\end{equation}
This only differs from the preceding case by the power of $x$ being one less. However since the system is no longer symmetric in $x$ and $y$, the blow-up structure in this case is very different. We will see that a single cascade of $16$ blow-ups is necessary to resolve an initial base point. In fact, this example is more similar to the second-order equation in section \ref{sec:degree4}. Extending the Hamiltonian system derived from (\ref{M3N4Hamiltonian}) to $\mathbb{P}^2$ yields the three systems of equations
\begin{align*}
x'(z) &= 4 y^3 + a_{21} x^2 + 2 a_{12} x y + a_{11} x + 2 a_{02} y + a_{01}, \\
y'(z) &= 3 x^2 - 2 a_{21} x y - a_{12} y^2 - 2 a_{20} x - a_{11} y - a_{10}, \\
u'(z) &= -\frac{2 u^2 v \alpha_{02}+u^3 \alpha_{01}+u^2 \alpha _{11}+2 u v \alpha _{12}+u \alpha_{21}+4 v^3}{u}, \\
v'(z) &= -\frac{2 u^2 v^2 \alpha_{02} +u^3 v \alpha_{01}+2 u^2 v \alpha_{11}+u^3 \alpha_{10}+2 u^2 \alpha _{20}+3 u v^2 \alpha_{12}+3 u v \alpha_{21}-3 u+4 v^4}{u^2}, \\
U'(z) &= -\frac{-2 U^2 V^2 \alpha_{20}-3 U^2 V \alpha _{21}+3 U^3 V-U V^3 \alpha_{10}-2 U V^2 \alpha_{11}-3 U V \alpha _{12}-V^3 \alpha_{01}-2 V^2 \alpha_{02}-4}{V^2}, \\
V'(z) &= -3 U^2+2 U V \alpha_{20}+2 U\alpha_{21}+V^2 \alpha_{10}+V \alpha_{11}+\alpha_{12}, 
\end{align*}
which have a single base point at $(u,v) = (0,0)$. This indeterminacy of the vector field can be removed by a cascade of $16$ blow-ups, obtained using computer algebra, which we give in the following. Since the expressions for the coordinate transformation become very long in this case, we only give the locations of base points to be blown up:

\begin{align*}
\mathcal{P}_1:& \quad (u,v) = (0,0) \quad \leftarrow \quad \mathcal{P}_2: \quad (U_1,V_1) = (0,0) \quad \leftarrow \quad \mathcal{P}_3: \quad (U_2,V_2) = (0,0) \\
\leftarrow \quad \mathcal{P}_4:& \quad (U_3,V_3) = (0,0) \quad \leftarrow \quad \mathcal{P}_5: \quad (U_4,V_4) = (1,0) \quad \leftarrow \quad \mathcal{P}_6: \quad (U_5,V_5) = (\alpha_{21},0)\\
\leftarrow \quad \mathcal{P}_7:& \quad (U_6,V_6) = \left(\alpha_{12} + \alpha_{21}^2, 0 \right) \quad  \leftarrow \quad \mathcal{P}_8: \quad (U_7,V_7) = \left( 2 \alpha_{12} \alpha_{21} + \alpha_{21}^3, 0 \right) \\
\leftarrow \quad \mathcal{P}_9: & \quad  (U_8,V_8) = \left( \alpha _{21}^4+3 \alpha _{12} \alpha _{21}^2+\alpha _{12}^2+\alpha _{20}, 0 \right) \\
\leftarrow \quad \mathcal{P}_{10}:& \quad (U_9,V_9) = \left( \alpha _{21}^5+4 \alpha_{12} \alpha_{21}^3+3 \alpha_{12}^2 \alpha_{21}+3 \alpha _{20} \alpha_{21}+\alpha_{11}, 0 \right) \\
\leftarrow \quad \mathcal{P}_{11}:& \quad (U_{10},V_{10}) = \left( -\frac{1}{6} \alpha _{12}'+\alpha _{21}^6 + 5 \alpha_{21}^4 \alpha_{12} + 6 \alpha_{12}^2 \alpha_{21}^2 +\alpha_{12}^3 + 6 \alpha_{20} \alpha_{21}^2 +3 \alpha _{11} \alpha _{21} \right. \\ & \qquad \qquad \left. + 3\alpha_{12} \alpha_{20} +\alpha_{02}, 0 \right) \\
\leftarrow \quad \mathcal{P}_{12}:& \quad  (U_{11},V_{11}) = \left( -\frac{19}{30} \alpha _{21} \alpha_{12}'-\frac{1}{5} \alpha _{12}'+\alpha _{21}^7+6 \alpha _{12} \alpha _{21}^5+10 \alpha_{12}^2 \alpha _{21}^3 +10 \alpha_{20} \alpha_{21}^3 \right. \\ & \qquad \qquad \left. +4 \alpha_{12}^3 \alpha _{21} +3 \alpha_{02} \alpha _{21}+12 \alpha_{12} \alpha_{20} \alpha_{21} +3 \alpha _{11} + 6 \alpha_{11} \alpha_{21}^2 + 3 \alpha_{11} \alpha_{12} , 0 \right) \\
\leftarrow \quad \mathcal{P}_{13}:& \quad (U_{12},V_{12}) = \left( -\frac{3}{4} \alpha _{21} \alpha_{12}' -\frac{3}{2} \alpha _{21}^2 \alpha _{12}'-\frac{7}{12} \alpha_{12} \alpha _{12}'+\alpha _{21}^8 +7 \alpha _{12} \alpha _{21}^6+15 \alpha _{12}^2 \alpha_{21}^4 \right. \\ & \qquad \qquad +15 \alpha _{20} \alpha _{21}^4+10 \alpha _{11} \alpha _{21}^3 +10 \alpha_{12}^3 \alpha _{21}^2+30 \alpha _{12} \alpha_{20} \alpha_{21}^2+12 \alpha _{11} \alpha _{12} \alpha_{21} + \alpha_{12}^4 \\ & \qquad \qquad \left. +2 \alpha _{20}^2+\alpha _{10}+6 \alpha _{12}^2 \alpha_{20}+3 \alpha_{02} \left( 2 \alpha_{21}^2+\alpha _{12}\right), 0 \right) \\ 
\leftarrow \quad \mathcal{P}_{14}:& \quad (U_{13},V_{13}) = \left( -\frac{17}{6} \alpha _{21}^3 \alpha _{12}'-\frac{7}{4} \alpha _{21}^2 \alpha _{12}'-\frac{11}{4} \alpha_{12} \alpha _{21} \alpha _{12}'-\frac{2}{3} \alpha _{12} \alpha_{12}' -\frac{1}{3} \alpha_{20}' \right. \\ & \qquad \qquad +\alpha _{21}^9+8 \alpha _{12} \alpha _{21}^7+21 \alpha _{12}^2 \alpha _{21}^5 + 21\alpha _{20} \alpha _{21}^5 +20 \alpha _{12}^3 \alpha _{21}^3+10 \alpha _2 \alpha _{21}^3 \\ & \qquad \qquad +60 \alpha_{12} \alpha_{20} \alpha _{21}^3+5 \alpha _{12}^4 \alpha _{21} +10 \alpha _{20}^2 \alpha_{21}+4 \alpha_{10} \alpha _{21}+12 \alpha _2 \alpha _{12} \alpha _{21} \\ & \qquad \qquad \left. +30 \alpha_{12}^2 \alpha _{20} \alpha _{21} +\alpha _{11} \left(15 \alpha _{21}^4+30 \alpha_{12} \alpha _{21}^2+6 \alpha _{12}^2+4 \alpha _{20}\right) +\alpha_{01}, 0\right) \\ 
\leftarrow \quad \mathcal{P}_{15}:& \quad (U_{14},V_{14}) = \left( -\frac{14}{3} \alpha _{21}^4 \alpha _{12}'-\frac{13}{4} \alpha _{21}^3 \alpha _{12}'-\frac{31}{4} \alpha_{12} \alpha _{21}^2 \alpha _{12}' -\frac{37}{12} \alpha _{12} \alpha _{21} \alpha
_{12}' \right. \\ & \qquad \qquad -\frac{5}{3} \alpha _{21} \alpha _{20}' -\frac{1}{2} \alpha _{11}'-\frac{5}{4} \alpha_{12}^2 \alpha _{12}' -\alpha_{20} \alpha_{12}'+\alpha _{21}^{10}+9 \alpha _{12} \alpha _{21}^8+28 \alpha _{12}^2 \alpha _{21}^6 \\ & \qquad \qquad +28 \alpha _{20} \alpha _{21}^6 +35 \alpha _{12}^3 \alpha_{21}^4+15 \alpha _2 \alpha _{21}^4+105 \alpha _{12} \alpha _{20} \alpha _{21}^4+15 \alpha_{12}^4 \alpha _{21}^2 \\ & \qquad \qquad +30 \alpha _{20}^2 \alpha _{21}^2 +30 \alpha _2 \alpha _{12} \alpha_{21}^2+90 \alpha _{12}^2 \alpha _{20} \alpha _{21}^2+4 \alpha_{01} \alpha _{21} +\alpha _{12}^5 \\ & \qquad \qquad +\alpha_{11} \left(21 \alpha _{21}^4+60 \alpha_{12} \alpha _{21}^2+30 \alpha _{12}^2+20 \alpha_{20}\right) \alpha _{21} +2 \alpha_{11}^2+6 \alpha_{02} \alpha _{12}^2 \\& \qquad \qquad \left. +10 \alpha _{12} \alpha _{20}^2+10 \alpha _{12}^3 \alpha _{20}+4 \alpha _2 \alpha _{20} +2 \alpha_{10} \left(5 \alpha _{21}^2+2 \alpha _{12}\right), 0 \right) \displaybreak \\
\leftarrow \quad \mathcal{P}_{16}:& \quad (U_{15},V_{15}) = \left( -7 \alpha _{21}^5 \alpha _{12}'-\frac{21}{4} \alpha_{21}^4 \alpha _{12}'-\frac{203}{12} \alpha_{12} \alpha _{21}^3 \alpha_{12}'  -\frac{17}{2} \alpha _{12} \alpha _{21}^2 \alpha _{12}'-5 \alpha_{21}^2 \alpha _{20}' \right. \\ & \qquad \qquad-\frac{5}{2} \alpha_{21} \alpha _{11}'-7 \alpha _{12}^2 \alpha_{21} \alpha_{12}' -\frac{29}{5} \alpha _{20} \alpha _{21} \alpha_{12}'-\alpha _2'-\frac{4}{3} \alpha_{12}^2 \alpha _{12}'-\alpha _{11} \alpha_{12}'-\frac{6}{5} \alpha _{20} \alpha_{12}' \\ & \qquad \qquad -\frac{5}{3} \alpha _{12} \alpha_{20}'+\frac{1}{6} \alpha _{21}''+\alpha _{21}^{11}+10 \alpha_{12} \alpha _{21}^9  +36 \alpha _{12}^2 \alpha _{21}^7 +36 \alpha _{20} \alpha _{21}^7 +28 \alpha _{11} \alpha _{21}^6 \\ & \qquad \qquad +56 \alpha _{12}^3 \alpha _{21}^5+168 \alpha _{12} \alpha _{20} \alpha _{21}^5 +105 \alpha _{11} \alpha _{12} \alpha _{21}^4+35 \alpha _{12}^4 \alpha
_{21}^3+70 \alpha _{20}^2 \alpha _{21}^3 +20 \alpha _{10} \alpha _{21}^3 \\ & \qquad \qquad +210 \alpha_{12}^2 \alpha _{20} \alpha_{21}^3+90 \alpha_{11} \alpha_{12}^2 \alpha_{21}^2+60 \alpha_{11} \alpha_{20} \alpha _{21}^2 +6 \alpha_{12}^5 \alpha_{21} +10 \alpha _{11}^2 \alpha_{21} \\ & \qquad \qquad +60 \alpha _{12} \alpha _{20}^2 \alpha_{21}+20 \alpha_{10} \alpha_{12} \alpha_{21}  +60\alpha_{12}^3 \alpha_{20} \alpha_{21}+10 \alpha_{11} \alpha _{12}^3+20 \alpha _{11} \alpha_{12} \alpha_{20} \\ & \qquad \qquad \left. +2 \alpha_{01} \left(5 \alpha _{21}^2+2 \alpha _{12}\right)+\alpha_{02} \left(4\alpha_{11}+\alpha_{21} \left(21 \alpha _{21}^4+60 \alpha_{12} \alpha _{21}^2+30 \alpha_{12}^2+20 \alpha _{20}\right)\right), 0 \right).
\end{align*}
After the $16$th blow-up, the system of equations takes the following form:
\begin{equation}
\label{system16}
\begin{aligned}
u_{16}' &= \frac{-3600 + p_{16,1}(z,u_{16},v_{16})}{u_{16}^4 v_{16}^5 \cdot d(z,u_{16},v_{16})^2},  \\
v_{16}' &= \frac{-240(2\alpha_{21}'(z)^2 + 2\alpha_{21}(z)^2 \alpha_{21}''(z) + 3\alpha_{12}''(z)) + p_{16,2}(z,u_{16},v_{16})}{u_{16}^5 v_{16}^3 \cdot d(z,u_{16},v_{16})^2},  \\
U_{16}' &= \frac{240(2\alpha_{21}'(z)^2 + 2\alpha_{21}(z)^2 \alpha_{21}''(z) + 3\alpha_{12}''(z)) + P_{16,1}(z,U_{16},V_{16})}{V_{16}^5 \cdot D(z,U_{16},V_{16})^2}, \\
V_{16}' &= \frac{-3600 + P_{16,2}(z,U_{16},V_{16})}{V_{16}^4 \cdot D(z,U_{16},V_{16})^2},
\end{aligned}
\end{equation}
where $p_{16,i}$ and $P_{16,i}$, $i=1,2$, are polynomials in $u_{16},v_{16}$ and $U_{16},V_{16}$, respectively, such that $p_{16,i}(z,0,v_{16}) = 0 = P_{16,i}(z,U_{16},0)$ on the exceptional curve $\mathcal{L}_{16}: \{u_{16}=0\} \cup \{V_{16}=0\}$. The polynomial expressions $d(z,u_{16},v_{16})$ and $D(z,U_{16},V_{16})$, whose zero sets are the proper transforms in these coordinates of the exceptional curves $\mathcal{L}_i$, $i=1,\dots,15$, of all previous blow-ups, satisfy $d(z,0,v_{16}) = 60 = D(z,U_{16},0)$ on the curve $\mathcal{L}_{16}$. Due to their lengthy nature we omit writing down the full expressions.
Thus, unless the condition 
\begin{equation}
\label{M3N4cond}
2\alpha_{21}'(z)^2 + 2 \alpha_{21}(z) \alpha_{21}''(z) + 3 \alpha_{12}''(z) = (\alpha_{21}^2 + 3\alpha_{12})'' \equiv 0
\end{equation}
is satisfied, we can see that the solutions of system (\ref{system16}) admit logarithmic singularities. Indeed, integrating the fourth equation in system (\ref{system16}) gives the leading order behaviour $V_{16} \sim (z-z_0)^{1/5}$. Inserting this into the third equation, $U_{16}$ has a logarithmic branch point.  Thus, for the absence of logarithmic singularities we require condition (\ref{M3N4cond}) to be satisfied, which amounts to the function $\alpha_{21}(z)^2 + 3\alpha_{12}(z)$ being at most linear in $z$. In this case, one factor of $u_{16}$ and $V_{16}$ cancel in the second resp.\ third equation of system (\ref{system16}), and by interchanging the role of dependent and independent variables, we can write the system of equations in the form
\begin{equation*}
\begin{aligned}
\frac{d z}{d V_{16}} & = \frac{V_{16}^4 \cdot D(z,U_{16},V_{16})^2}{-3600 + P_{16,2}(z,U_{16},V_{16})}, \\
\frac{d U_{16}}{d V_{16}} & = \frac{\tilde{P}_{16,1}(z,U_{16},V_{16})}{-3600 + P_{16,2}(z,U_{16},V_{16})},
\end{aligned}
\end{equation*}
which, for initial data $(z,U_{16},V_{16}) = (z_0,h,0)$ on the exceptional curve $\mathcal{L}_{16}$ from the last blow-up, becomes a regular initial value problem with analytic solutions
\begin{equation*}
z(V_{16}) = z_0  -\frac{1}{5} V_{16}^5 + O(V_{16}^6), \quad U_{16}(V_{16}) = h + O(V_{16}).
\end{equation*}

Inverting the power series for $z-z_0$ leads to series expansions for $U_{16}$ and $V_{16}$ in $(z-z_0)^{1/5}$, which translate to $5$th-root type algebraic poles in the original variables $x,y$.

We still need to show that, after each blow-up, the exceptional line $\mathcal{L}_i$ is inaccessible for the flow of the vector field, apart from at the newly introduced base point $\mathcal{P}_{i+1}$. For this we introduce the following auxiliary function of the form (obtained in \cite{Kecker2016})
\begin{equation*}
	W = H - \beta_{2,0}(z) y^2 - \beta_{3,1}(z) \frac{y^3}{x} - \beta_{4,2}(z) \frac{y^4}{x^2} - \beta_{1,0}(z) y - \beta_{2,1}(z) \frac{y^2}{x} - \beta_{3,2}(z) \frac{y^3}{x^2}.
\end{equation*}
The functions $\beta_{kl}(z)$ can be determined using the procedure described in \cite{Kecker2016}, where they are fixed so that $W$ satisfies a first-order equation with bounded coefficients. On the other hand, we can also obtain the $\beta_{kl}(z)$ in the blow-up process itself by the requirement that after each blow-up the logarithmic derivative $\frac{W'}{W}$ remains bounded on the exceptional curves, away from the base points. Using the latter method we have found (with the condition (\ref{M3N4cond}) imposed):
\begin{equation*}
	\begin{aligned}
		\beta_{2,0} &=\frac{\alpha_{2,1}'}{6}, \\
		\beta_{3,1} &=\frac{1}{15} \left(3 \alpha_{1,2}'+2 \alpha_{2,1} \alpha_{2,1}'\right), \\
		\beta_{4,2} &=\frac{1}{60} \left(9 \alpha_{2,1} \alpha_{1,2}'+5 \alpha_{1,2} \alpha_{2,1}'+6 \alpha_{2,1}^2 \alpha_{2,1}'\right), \\
		\beta_{1,0} &= \frac{1}{30} \left(2 \alpha_{1,2} \alpha_{1,2}'+3 \alpha_{2,1}^2 \alpha_{1,2}'+10 \alpha_{2,0}'+3 \alpha_{1,2} \alpha_{2,1} \alpha_{2,1}'+2 \alpha_{2,1}^3 \alpha_{2,1}'\right), \\
		\beta_{2,1} &=\frac{1}{60} \left(30 \alpha_{1,1}'-2 \alpha_{1,2} \alpha_{2,1} \alpha_{1,2}'-3 \alpha_{2,1}^3 \alpha_{1,2}'+20 \alpha_{2,1} \alpha_{2,0}'+20 \alpha_{2,0} \alpha_{2,1}'-3 \alpha_{1,2} \alpha_{2,1}^2\alpha_{2,1}' -2 \alpha_{2,1}^4 \alpha_{2,1}'\right), \\
		\beta_{3,2} &=\frac{1}{60} \left(60 \alpha_{0,2}'+30 \alpha_{2,1} \alpha_{1,1}'-8 \alpha_{1,2}^2 \alpha_{1,2}'+24 \alpha_{2,0} \alpha_{1,2}'-14 \alpha_{1,2} \alpha_{2,1}^2 \alpha_{1,2}'-3 \alpha_{2,1}^4 \alpha_{1,2}'+20 \alpha_{1,2} \alpha_{2,0}' \right. \\ & \left. +20 \alpha_{2,1}^2 \alpha_{2,0}'+20 \alpha_{1,1} \alpha_{2,1}'-12 \alpha_{1,2}^2 \alpha_{2,1} \alpha_{2,1}'+36 \alpha_{2,0} \alpha_{2,1} \alpha_{2,1}'-11 \alpha_{1,2} \alpha_{2,1}^3 \alpha_{2,1}'-2 \alpha_{2,1}^5 \alpha_{2,1}'-10 \alpha_{2,1}'' \right).
	\end{aligned}
\end{equation*}

Denoting by $\mathcal{S}_{16}(z)$ the extended space obtained from $\mathbb{P}^2$ by blowing up the cascade of base points $\mathcal{P}_1 \leftarrow \cdots \leftarrow \mathcal{P}_{16}$, we define the analogue of the space of initial values for the system by $\mathcal{S}_{16}(z) \setminus \mathcal{I}_{15}'(z)$, where $\mathcal{I}_{15}(z) = I \cup \bigcup_{i=1}^{15} \mathcal{L}_i$. In each point of this space, the system is either regular or defines solutions with a $5$th-root type singularity, whereas the exceptional curves $\mathcal{L}_i$, $i=1,\dots15$, remain inaccessible. Using similar arguments as in Proposition \ref{degree4prop}, we can thus show:
\begin{prop}
Under the condition $\alpha_{21}^2(z) + 3\alpha_{12}(z) = az + b$, $a,b \in \mathbb{C}$, the Hamiltonian system derived from (\ref{M3N4Hamiltonian}) has the quasi-Painlev\'e property, i.e.\ all movable singularities obtained by analytic continuation along finite-length curves are $5$th-root type algebraic poles.
\end{prop}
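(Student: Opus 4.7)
The plan is to mirror the arguments of Propositions \ref{degree4prop} and \ref{MN4_prop}, combining compactness of the blown-up space $\mathcal{S}_{16}(z)$ with Lemmas \ref{Painlemma} and \ref{log_bounded} and the auxiliary function $W$ listed just above the statement, to show that the only limiting behaviour available to a solution at a movable singularity is approach to the last exceptional curve $\mathcal{L}_{16}$, where the transformed system is regular and yields a $5$th-root type algebraic pole.

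I would start by fixing a local solution $(x(z),y(z))$ analytically continued along a finite-length curve $\gamma \subset \mathbb{C}$ with endpoint $z_\ast$, at which it has a movable singularity. Let $\Gamma$ denote the lift of $\gamma$ into $\bigcup_{z \in \Omega} \mathcal{S}_{16}(z)$ and pick any sequence $(z_n) \subset \gamma$ with $z_n \to z_\ast$. Since $\mathcal{S}_{16}(z_\ast)$ is compact, a subsequence $(z_{n_k})$ satisfies $\Gamma(z_{n_k}) \to P_\ast \in \mathcal{S}_{16}(z_\ast)$.

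Next I would localise $P_\ast$. Because the functions $\beta_{k\ell}(z)$ have been chosen so that, after each of the sixteen blow-ups, $W$ is infinite on the proper transforms of $\mathcal{L}_0,\mathcal{L}_1,\dots,\mathcal{L}_{15}$ away from the successive base points while its logarithmic derivative $W'/W$ remains bounded there, Lemma \ref{log_bounded} rules out $P_\ast \in \mathcal{I}_{15}'(z_\ast)$. If instead $P_\ast$ lay in $\mathcal{S}_{16}(z_\ast) \setminus (\mathcal{I}_{15}'(z_\ast) \cup \mathcal{L}_{16})$, the vector field would be analytic there and Lemma \ref{Painlemma} would force $(x,y)$ to be analytic at $z_\ast$, contradicting the existence of a singularity. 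Hence $P_\ast \in \mathcal{L}_{16}$.

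It remains to identify the nature of the singularity. Under the hypothesis $(\alpha_{21}^2 + 3\alpha_{12})'' \equiv 0$, the cancellation noted after (\ref{M3N4cond}) turns the system (\ref{system16}), with dependent and independent variables interchanged, into a regular initial value problem on $\mathcal{L}_{16}$. Applying Lemma \ref{Painlemma} in, say, the chart $(U_{16},V_{16})$ about $P_\ast$ produces an analytic solution of the form $z(V_{16}) = z_\ast - \tfrac{1}{5} V_{16}^5 + O(V_{16}^6)$ and $U_{16}(V_{16}) = h + O(V_{16})$ (with the analogous statement in the $(u_{16},v_{16})$ chart). Inversion of the series gives expansions for $(U_{16},V_{16})$ in powers of $(z-z_\ast)^{1/5}$, and the birational coordinate change accumulated by the sixteen blow-ups then translates this directly into a $5$th-root type algebraic pole for $x(z)$ and $y(z)$. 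I expect the main technical burden not to lie in this concluding step, but in the verification that $W'/W$ is indeed bounded on each of the fifteen intermediate exceptional curves away from the respective base point, which in this case requires the long list of $\beta_{k\ell}(z)$ given above and is best carried out by computer algebra, exactly as remarked in the earlier cases.
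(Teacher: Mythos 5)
Your proposal is correct and follows essentially the same route as the paper, which itself disposes of this proposition by the phrase ``using similar arguments as in Proposition \ref{degree4prop}'': compactness of $\mathcal{S}_{16}(z_\ast)$ to extract a limit point, Lemma \ref{log_bounded} with the auxiliary function $W$ to exclude $\mathcal{I}_{15}'(z_\ast)$, Lemma \ref{Painlemma} to force $P_\ast \in \mathcal{L}_{16}$, and the regularised system in the $(U_{16},V_{16})$ chart with interchanged variables to produce the $(z-z_\ast)^{1/5}$ expansions. You have correctly identified that the real burden is the computer-algebra verification of the boundedness of $W'/W$ on the fifteen intermediate exceptional curves, exactly as the paper remarks.
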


\section{Discussion}
For the examples of second-order equations in Sections \ref{sec:degree4} and \ref{sec:degree5}, as well as the Hamiltonian systems in Section \ref{sec:HamiltonianSystems} we have constructed, under the conditions by which these systems do not admit logarithmic singularities, the analogue of the space of initial values in the sense of Okamoto's space for the Painlev\'e equations. In this case, the solutions are transversal to the exceptional curve introduced by the last blow-up for any cascade of base points. The difference to the Painlev\'e case is that, in order to obtain regular initial value problems in the coordinates of the extended phase space, an additional change of dependent and independent variable is needed. The existence of these regular systems allows us to conclude, using Lemma \ref{log_bounded} with an appropriate auxiliary function and together with Painlev\'e's lemma (Lemma \ref{Painlemma}), that the only movable singularities that can occur in these equations, by analytic continuation along finite-length paths, are algebraic poles. 

This procedure thus firstly serves as an algorithm to determine, for a given second-order equation or system of two equations, what types of singularities their solutions can develop and give conditions under which there are no logarithmic singularities. In the latter case, the construction of the space of initial values allows us to show that these equations have the quasi-Painlev\'e property.

In the examples considered in this article, it is crucial that the cascades of blow-ups required to resolve the base points terminate. By a powerful theorem by Hironaka \cite{Hironaka1964} for singular algebraic varieties, the singularities of an arbitrary algebraic variety can always be resolved by a finite number of blow-ups. This is not the case, however, for flows of vector fields. An example where the sequence of blow-ups does not terminate is given by Smith's equation $y'' + 4y^3 y' + y = 0$, which is not of Hamiltonian form. This was noted by the authors in \cite{KeckerFilipuk} and is a hint that for this equation more complicated movable singularities exist than considered in this article. In fact, as noted earlier, Smith himself showed that there do exist singularities besides the algebraic poles (\ref{smithexpansion}), which are are known to be accumulation points of such algebraic poles. 
It would be an important step to find (necessary and / or sufficient) conditions for a differential equation to decide whether such behaviour is possible or not. We believe that, at least for the second-order equations in \cite{filipukhalburd1} and the Hamiltonian systems in \cite{Kecker2016} such behaviour is not possible, although we cannot show this in general.

In the Hamiltonian setting, the level sets $H(z,x,y) = c$ define, for generic $z$ and analytic functions $\alpha_{i,j}(z)$ in the Hamiltonian $H(z,x,y;\alpha_{ij})$, algebraic curves in $\mathbb{P}^2$. For the Painlev\'e Hamiltonians, and also the system with cubic Hamiltonian in section \ref{sec:cubicHam}, these level sets have genus $g=1$, i.e. represent elliptic curves. This is also expressed in the fact that the Painlev\'e transcendents, in general, are asymptotic to elliptic functions in certain sectors of the complex plane.
For the Hamiltonians of the second-order equations in sections \ref{sec:degree4} and \ref{sec:degree5}, the level sets $H(z,x,y) = c$ are algebraic curves of hyper-elliptic type, with genus $g=2$, as is the case for the other examples of Hamiltonians considered in this article. The number of blow-ups required, ranging from $14$ to $16$, is substantially larger than $9$ in the Painlev\'e case. We would like to propose several questions which will require further investigation.

Can one predict, from the form of the Hamiltonian $H(z,x,y)$, how many blow-ups will be required to completely resolve all base points? In particular, can one give conditions under which the cascades of blow-ups terminate?

Can a classification, similar to Sakai's classification \cite{Sakai2001} for the Painlev\'e equations in terms of point configurations on rational surfaces, be given for Hamiltonians defining algebraic curve of genus $g \geq 2$, and, do there exist difference equations with a similar meaning as the discrete Painlev\'e equations?

\section*{Declarations}

\paragraph{\bf Funding}
GF acknowledges the support of the National Science Center (Poland) through the grant OPUS 2017/25/B/BST1/00931. TK acknowledges support of the London Mathematical Society (LMS) and the Faculty of Mathematics, Informatics and Mechanics at the University of Warsaw (MIMUW) for travel grants to visit Warsaw in the years 2014, 2015 and 2016; these visits, were this research was initiated, were essential for the success of the project. \\

\paragraph{\bf Data availability}
Data sharing is not applicable to this article as no datasets were generated or analysed during the current study. \\

\paragraph{\bf Conflict of interest} 
The authors declare that there are no conflicts of interest.

\bibliographystyle{plain}

\vspace{2cm}
\noindent Authors' details: \\

\noindent Dr.\ Thomas Kecker \\ School of Mathematics and Physics \\
University of Portsmouth, UK \\
Email: thomas.kecker@port.ac.uk \\

\noindent Dr.\ hab.\ Galina Filipuk \\
Faculty of Mathematics, Informatics and Mechanics \\
University of Warsaw, Poland \\
Email: filipuk@mimuw.edu.pl

\end{document}